 \newtheorem{thm}{Theorem}[section]
 \newtheorem{cor}[thm]{Corollary}
 \newtheorem{lem}[thm]{Lemma}
 \theoremstyle{definition}
 \newtheorem{defn}[thm]{Definition}
 \newtheorem{ex}[thm]{Example}
 \theoremstyle{remark}
 \newtheorem{rem}[thm]{Remark}
 \numberwithin{equation}{section}
\begin{document}

%-------------------------------------------------------------------------
% editorial commands: to be inserted by the editorial office
%
%\firstpage{1} \volume{228} \Copyrightyear{2004} \DOI{003-0001}
%
%
%\seriesextra{Just an add-on}
%\seriesextraline{This is the Concrete Title of this Book\br H.E. R and S.T.C. W, Eds.}
%
% for journals:
%
%\firstpage{1}
%\issuenumber{1}
%\Volumeandyear{1 (2004)}
%\Copyrightyear{2004}
%\DOI{003-xxxx-y}
%\Signet
%\commby{inhouse}
%\submitted{March 14, 2003}
%\received{March 16, 2000}
%\revised{June 1, 2000}
%\accepted{July 22, 2000}
%
%
%
%---------------------------------------------------------------------------
%Insert here the title, affiliations and abstract:
%

\title[Matrix methods for perfect signal  recovery]
 {Matrix Methods for Perfect Signal Recovery   Underlying Range Space of   Operators}

%----------Author

%----------Author 1

%----------Author 1
\author[F. Arabyani Neyshaburi]{F. Arabyani-Neyshaburi}
\address{Department of Mathematics,  Ferdowsi University of Mashhad  and Center of Excellence in Analysis on Algebraic Structures (CEAAS), P.O.Box 1159-91775, Mashhad, Iran.}
\email{fahimeh.arabyani@gmail.com}

\author[R. A. Kamyabi-Gol]{R. A.  Kamyabi-Gol}
\address{Department of Mathematics, Faculty of Math, Ferdowsi University of Mashhad  and Center of Excellence in Analysis on Algebraic Structures (CEAAS), P.O.Box 1159-91775, Mashhad, Iran.}
\email{kamyabi@um.ac.ir}

%\thanks{This work was completed with the support of our
%\TeX-pert.}
%----------Author 2
%\author{A Second Author}
%\address{The address of\br
%the second author\br
%sitting somewhere\br
%in the world}
%\email{dont@know.who.knows}
%----------classification, keywords, date
\subjclass{Primary 42C15; Secondary 42C40, 41A58.}

%\date{January 1, 2004}
%----------additions
%\dedicatory{To my boss}
%%% ----------------------------------------------------------------------
\vspace{1.7cm}
\begin{abstract}
The most important purpose of this article is to investigate perfect reconstruction underlying range space of  operators in finite dimensional Hilbert spaces by matrix methods. To this end, first we obtain more structures of the canonical $K$-dual.
% and survey optimal $K$-dual problem  under probabilistic erasures.
Then, we survey  the problem of recovering and robustness  of signals  when the erasure set satisfies  the  minimal redundancy condition or the  $K$-frame is maximal robust.   Furthermore, we show that  the error rate is reduced under erasures if  the $K$-frame is of uniform excess. Toward the protection of encoding frame (K-dual) against erasures, we introduce a new  concept so called   $(r,k)$-matrix to recover lost data and solve the  perfect recovery problem  via matrix equations. Moreover, we   discuss  the existence of such matrices by using  minimal redundancy condition on decoding frames for operators. Finally, we  exhibit  several examples that illustrate our results and the advantage of using   the new  matrix with respect to previous approaches in existence and construction.
\end{abstract}

%%% ----------------------------------------------------------------------
\maketitle

%%% ----------------------------------------------------------------------
%\tableofcontents
\textbf{Key words: Reconstruction error, Gram matrix, $K$-dual frame, maximal robust, minimal redundancy condition, uniform excess}
%%% ----------------------------------------------------------------------
\maketitle
%%% ----------------------------------------------------------------------
%\tableofcontents
%\textbf{Key words:} Fusion frames; dual fusion frames; approximate duals; Riesz fusion bases.
\section{Introduction and preliminaries}

\smallskip
\goodbreak
The theory of frames has  established  efficient algorithms for a wide range of  applications  in the last twenty years \cite{ Ben06, Bod05, Bol98, Donoho, larson}.
In most of  those applications, they deal with  dual frames to reconstruct the modified data and compare it with the original data.
In frame theory setting, an original signal $f$  is encoded  by the measurements $\theta_{F}^{*}f$ (encoded coefficients), where $\theta_{F}^{*}$ is the analysis operator of a frame $F$. Then,
from these measurements $f$ can be recovered  applying a reconstruction formula
by a dual frame $G$ (decoding frame) as $\theta_{G}\theta_{F}^{*}f$.  In real applications, in these transmissions, usually  a part of
the data vectors are corrupted or lost, and we may have to perform the reconstruction by using
the partial information at hand. So, searching for the  best
dual frames that minimize  the reconstruction errors when erasures occur,  optimal dual problem, is one of the  most important problems in frame theory that was   introduced by Han et al. in \cite{leng, lopez}. To state the  optimal dual problem, we first recall some basic notations of finite  classical frames. Let $\mathcal{H}_{n}$ be an $n$-dimensional   Hilbert space and $I_{m}=\{1, 2, ... ,m\}$. A sequence $F:=\{f_{i}\}_{i\in I_{m}}\subseteq\mathcal{H}_{n}$ is called a
\textit{frame} for $\mathcal{H}_{n}$ whenever ${\textit{span}}\{f_{i}\}_{i\in I_{m}}=\mathcal{H}_{n}$. The \textit{synthesis operator} $\theta_{F}:
 l^{2}(I_{m})\rightarrow\mathcal{H}_{n}$ is defined by $\theta_{F}\lbrace c_{i}\rbrace= \sum_{i\in I_{m}}
c_{i}f_{i}$. If $\lbrace f_{i}\rbrace_{i\in I_{m}}$ is a frame, then $S_{F}=
\theta_{F}\theta_{F}^{*}$ is called frame operator where $\theta_{F}^{*}: \mathcal{H}_{n}\rightarrow l^{2}(I_{m})$, the adjoint
of $\theta_{F}$, is given by $\theta_{F}^{*}f= \lbrace \langle f,f_{i}\rangle\rbrace_{i
\in I_{m}}$ and is known as the \textit{analysis operator}. A sequence $G:=\{g_i\}_{i\in I_{m}}\subseteq\mathcal{H}_{n}$ is called a
\textit{ dual}
 for $\{f_{i}\}_{i\in I_{m}}$ if $\theta_{G}\theta_{F}^{*}=I_{\mathcal{H}_{n}}$.
 A special dual frame as $\{S_{F}^{-1}f_{i}\}_{i\in I_{m}}$  is
called the canonical dual of $F$. It is well known that $\{g_i\}_{i\in I_{m}}$ is a dual  frame of $\{f_i\}_{i\in I_{m}}$ if and only if $g_{i} = S_{F}^{-1}f_{i}+u_{i}$, for all $i\in I_{m}$ where $U=\{u_i\}_{i\in I_{m}}$ satisfies $\theta_{F}\theta^{*}_{U}=0$. We refer the reader to  \cite{Chr08} for more information on frame theory. The optimal dual problem,  imposes the following problem:
let $F = \{f_{i}\}_{i\in I_{m}}$ be a frame for
$\mathcal{H}_{n}$ (encoding frame), find a dual frame of $F$ that minimize  the reconstruction errors when erasures occur. If $G =
\{g_{i}\}_{i\in I_{m}}$ is a dual of $F$ (decoding frame) and $\Lambda \subset I_{m}$, then
the error operator $E_{\Lambda}$ is defined by
\begin{eqnarray*}
E_{\Lambda} = \sum_{i\in \Lambda}f_{i} \otimes g_{i} = \theta_{G}^{*}D\theta_{F},
\end{eqnarray*}
where $D$ is an  $m\times m$ diagonal
matrix with $d_{ii}=1$ for $i\in \Lambda$ and $0$ otherwise.    Let
\begin{eqnarray}\label{01.optimal}
 d_{r}(F,G) = \max \{ \Vert \theta_{G}^{*}D\theta_{F}\Vert : D\in \mathcal{D}_{r}\} = \max\{\|E_{\Lambda}\| : \vert \Lambda \vert = r\}, \end{eqnarray}
in which $\vert \Lambda\vert$ is the cardinality of $\Lambda$, the norm used in (\ref{01.optimal}) is the operator norm, $1\leq r<m$ is a natural number and $\mathcal{D}_{r}$ is the set of all $m \times m$ diagonal matrices
with $r$ $1 ' s$ and $m-r$ $0 ' s$. Then, $ d_{r}(F,G) $ is the largest possible error
when $r$-erasures fall out. Indeed, $G$ is called an optimal dual frame of $F$
for $1$-erasure or $1$-loss optimal dual  if \begin{eqnarray}\label{1-erasure def}
 d_{1}(F,G) = \min \left\{  d_{1}(F,Y) : Y \textit{ is a dual of  }         F\right\}. \end{eqnarray}
Inductively, for $r>1$, a dual frame $G$ is called an \textit{optimal dual} of $F$  for
$r$-erasures ($r$-loss optimal dual) if it is optimal for $(r-1)$-erasures and
\begin{eqnarray*}
 d_{r}(F,G) = \min \left\{  d_{r}(F,Y) : Y \textit{ is a dual of  }         F\right\}. \end{eqnarray*} See \cite{Aa2018, holms, leng, lopez, Saliha} and references therein for more details and information on optimal reconstruction problem and  identification of optimal dual frames.

This work, was motivated by some recent methods of perfect recovery  of signals from   erasures corrupted frame coefficients at known or unknown locations \cite{han2020, han2}. In all previous approaches presented  on classical frames, erasures considered in frame coefficients (encoding frame coefficients). 
We are going to extend perfect recovery problem on $K$-frame theory,  however we show that    the methods previously used
 does not meet the requirements of reconstruction  in    $K$-frame setting.  Hence, we consider the  erasure coefficients on $K$-dual coefficients (as encoding frame instead of $K$-frame).  Then  we  introduce a new concept, called $(r,k)$-matrix, to recover lost data and
  get perfect reconstruction.   This also leads to some new method for recovery  problem in ordinary frames,  that
 sometimes  work for frames better than the previous methods. Among other things, 
 we demonstrate the adventages of using the new method in existence and construction with respect to the previous approaches.

The present  paper is organized as follows. In Section 2, we recall some definitions and notations of finite $K$-frames. In Section 3,
we provide more characterizations and structures of $K$-duals and particularly, the  canonical $K$-duals. We  present some concepts such as    minimal redundancy condition and maximal robustness for $K$-frames and provide some necessary conditions for  a finite set of indices   which satisfies    minimal redundancy condition. Moreover, we  discuss  the  robustness   of $K$-frame under operator perturbation, particularly when the erasure set satisfies  the  minimal redundancy condition or the  $K$-frame is maximal robust, in Section 4. Then in Section 5, we introduce a new matrix called $(r,k)$-matrix and give the necessary condition for the existence of $(r,k)$-matrices. This notion leads to a new matrix equation which  allows the signal vectors underlying the range space of a bounded operator to be exactly recovered. This approach not only assures that $K$-frames with uniform excess  under some erasures of $K$-dual coefficients make  complete reconstruction, but also provides a new method for erasure recovery  by using  ordinary frames,  by changing encode and decode frames, which
 sometimes  work better than the previous methods.    Finally, in Section 6,
we  exhibit  several examples to illustrate our results and the advantage of using  $(r,k)$-matrices.

\section{Finite $K$-frames}
 Atomic decomposition for a closed subspace $\mathcal{H}_{0}$ of a Hilbert space $\mathcal{H}$, as a new approach for reconstruction, was introduced by  Feichtinger et al.  
 with frame-like properties \cite{Han}. However, the sequences in  atomic decompositions
do not necessarily  belong to $\mathcal{H}_{0}$,  this striking property is valuable, especially   in sampling theory   \cite{Paw, Werthrt}. Then, $K$-frames were introduced  to study atomic systems with respect to a bounded operator $K\in B(\mathcal{H})$ \cite{Gav07}. In fact, $K$-frames   are equivalent with atomic systems for the operator $K$  and  help us to reconstruct elements from the range of a bounded linear operator $K$ in a separable Hilbert space.
In the sequel, we recall  some definitions and notations of finite $K$-frames.  A sequence $F:=\lbrace f_{i}\rbrace_{i \in I_{m}} \subseteq \mathcal{H}_{n}$ is called a $K$-frame for $\mathcal{H}_{n}$, if $R(K)\subset R(\theta_{F})$  or equivalently there exist constants $A, B > 0$ such that
\begin{eqnarray}\label{001}
A \Vert K^{*}f\Vert^{2} \leq \sum_{i\in I_{m}} \vert \langle f,f_{i}\rangle\vert^{2} \leq B \Vert f\Vert^{2}, \quad (f\in \mathcal{H}_{n}).
\end{eqnarray}
If $K$ is an onto operator then $F$ is an ordinary frame and therefore, $K$-frames arise  as a generalization of the ordinary frames. The constants $A$ and $B$ in $(\ref{001})$ are called the lower and the upper bounds of $F$, respectively. Similar to ordinary frames, the synthesis operator can be defined as $\theta_{F}: l^{2}(I_{m})\rightarrow \mathcal{H}$; $\theta_{F}(\{ c_{i}\}_{i\in I_{m}}) = \sum_{i\in I_{m}} c_{i}f_{i}$. A matrix representation for this bounded operator is the  matrix  $F_{n \times m}$ whose $i$th column is the $i$th $K$-frame vector, i.e.,
\begin{eqnarray*}
F= [f_{1}, ... f_{m}].
\end{eqnarray*}
Notice that, we sometimes denote a $K$-frame $F=\lbrace f_{i}\rbrace_{i \in I_{m}}$ by its synthesis matrix $F$.
Also, the analysis operator is given by $\theta_{F}^{*}(f)= \{ \langle f,f_{i}\rangle\}_{i\in I_{m}}$ and has the matrix representation as $F^{*}$.
The frame operator is given by  $S_{F} = \theta_{F}\theta_{F}^{*}$ with the matrix representation as $FF^{*}$ and $\mathcal{G}_{F} = F^{*} F$ denotes the Gramian matrix with respect to the $K$-frame $F$. Unlike ordinary frames, the frame operator of a K-frame is not invertible in general. Although,   in finite dimensional Hilbert spaces, $K$ is a closed range operator so $S_{F}$ from $R(K)$ onto $S_{F}(R(K))$ is an invertible operator \cite{Xiao}. When we need   this restriction of the $K$-frame operator we use the notation $S_{F}\vert R(K)$.
Suppose $M_{K}$ denotes matrix representation of the operator $K\in B(\mathcal{H}_{n})$ with respect to the standard orthonormal basis of $\mathcal{H}_{n}$. Then, a $K$-frame is said to be $\alpha$-tight whenever $FF^{*}=\alpha M_{K}M_{K}^{*}$, Parseval if $\alpha=1$ and equal norm (EN) if the columns of $F$ have the equal norm.
% In particular
%\begin{eqnarray}\label{bound S}
%B^{-1} \| f\| \leq \|S_{F}^{-1}f\| \leq A^{-1}\|K^{\dag}\|^{2}\| f\|, \quad (f\in S_{F}(R(K))),
%\end{eqnarray}
%where $K^{\dag}$ is the pseudo inverse of $K$, see \cite{Chr08}.

The authors in \cite{arefi3} considered the notion of duality for $K$-frames and presented several methods for construction and characterization of $K$-frames and their duals. Indeed, a Bessel sequence
 $\{g_{i}\}_{i \in I_{m}}\subseteq \mathcal{H}_{n}$ is called a \textit{$K$-dual} of $\{ f_{i} \}_{i\in I_{m}}$ if
\begin{eqnarray}\label{dual1}
Kf = \sum_{i\in I_{m}} \langle f,g_{i}\rangle f_{i}, \quad (f\in \mathcal{H}_{n}),
\end{eqnarray}
or equivalently $G$ is $K$-dual of $F$ if  $FG^{*}=  M_{K}$.
%For every $K$- frame  $F=\{ f_{i} \}_{i\in I}$ such that  $K\in B(\mathcal{H})$ be a closed range operator, the Bessel sequence $\{ %K^{*}S_{F}^{-1}\pi_{S_{F}(R(K))}f_{i} \}_{i\in I}$ is a $K$-dual of $F$; so called the canonical $K$-dual. By the canonical $K$-dual %we can characterize all $K$-duals of a $K$-frame. Moreover, if    $\{ f_{i} \}_{i\in I}$ and $\{ g_{i} \}_{i\in I}$ be two Bessel %sequences as in $(\ref{dual1})$. Then $\{ f_{i} \}_{i\in I}$ and $\{ g_{i} \}_{i\in I}$ are $K$-frame and $K^{*}$-frame, %respectively \cite{arefi3}.
 The following result  is useful for the proof of  our main results.
%\begin{lem}\label{k.dual2}\cite{arefi3}
%Let Bessel sequences $\{ f_{i} \}_{i\in I}$ and $\{ g_{i} \}_{i\in I}$ satisfying  in $(\ref{dual1})$. Then $\{ f_{i} \}_{i\in I}$ and %$\{ g_{i} \}_{i\in I}$ are $K$-frame and $K^{*}$-frame, respectively.
%\end{lem}

%\begin{prop}\label{1}\cite{arefi3}
%Let $K$ be a bounded operator on $\mathcal{H}$ with closed range and $\lbrace f_{i} \rbrace_{i\in I}$ a $K$-frame of %$\mathcal{H}$. Then $\lbrace (K^{*})^{\dag}K^{*}S_{F}^{-1}\pi_{S_{F}(R(K))}f_{i} \rbrace_{i\in I}$ is a frame for $R(K)$.
%\end{prop}

\begin{thm}[Douglas   \cite{Douglas}]\label{equ0}
Let $L_{1}\in B(\mathcal{H}_{1}, \mathcal{H})$ and $L_{2}\in B(\mathcal{H}_{2}, \mathcal{H})$ be bounded linear mappings on given Hilbert spaces. Then the following assertions are equivalent:
\begin{itemize}
\item[(i)]
$R(L_{1}) \subseteq R(L_{2})$;
\item[(ii)]
$L_{1}L_{1}^{*} \leq \lambda^{2}L_{2}L_{2}^{*}$, \quad for some $\lambda > 0$;
\item[(iii)]
There exists a bounded linear mapping $X\in L(\mathcal{H}_{1}, \mathcal{H}_{2})$, such that $L_{1} = L_{2}X$.
\end{itemize}
Moreover, if (i), (ii) or (iii) are valid, then there exists a unique operator $X$ so that
\begin{itemize}
\item[(a)]
$\Vert X\Vert ^{2} = \inf \{\alpha>0, L_{1}L_{1}^{*}\leq \alpha L_{2}L_{2}^{*}\}$;
\item[(b)]
$N( L_{1}) = N(X)$;
\item[(c)]
$R(X) \subset \overline{R( L_{2}^{*})} $.
\end{itemize}
\end{thm}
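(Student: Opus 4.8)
The plan is to establish the cycle of implications $(iii)\Rightarrow(i)\Rightarrow(ii)\Rightarrow(iii)$, and then extract the uniqueness statement together with properties (a)--(c) by building $X$ explicitly.

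First I would prove the easy implications. For $(iii)\Rightarrow(i)$: if $L_{1}=L_{2}X$, then for any $h\in\mathcal{H}_{1}$ we have $L_{1}h=L_{2}(Xh)\in R(L_{2})$, so $R(L_{1})\subseteq R(L_{2})$. For $(ii)\Rightarrow(iii)$ one uses the inequality $L_{1}L_{1}^{*}\le\lambda^{2}L_{2}L_{2}^{*}$ to define a map on $R(L_{2}^{*})$: set $X_{0}(L_{2}^{*}h):=L_{1}^{*}h$. This is well defined and bounded because
\begin{eqnarray*}
\|L_{1}^{*}h\|^{2}=\langle L_{1}L_{1}^{*}h,h\rangle\le\lambda^{2}\langle L_{2}L_{2}^{*}h,h\rangle=\lambda^{2}\|L_{2}^{*}h\|^{2},
\end{eqnarray*}
so in particular $L_{2}^{*}h=0$ forces $L_{1}^{*}h=0$. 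Extend $X_{0}$ by continuity to $\overline{R(L_{2}^{*})}$ and by zero on its orthogonal complement $N(L_{2})$; call the resulting operator $Y$, which satisfies $\|Y\|\le\lambda$ and $Y L_{2}^{*}=L_{1}^{*}$, i.e. $L_{2}Y^{*}=L_{1}$. Taking $X=Y^{*}$ gives $(iii)$, and by construction $R(X)=R(Y^{*})\subseteq\overline{R(L_{2}^{*})}$, which is property (c); moreover the minimal choice of $\lambda$ is exactly $\inf\{\alpha>0:L_{1}L_{1}^{*}\le\alpha L_{2}L_{2}^{*}\}$, giving (a).

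The implication $(i)\Rightarrow(ii)$ is the step I expect to be the main obstacle, since it requires a closed-graph / uniform-boundedness argument rather than a direct computation. The idea is: assuming $R(L_{1})\subseteq R(L_{2})$, for each $h\in\mathcal{H}_{1}$ the element $L_{1}h$ lies in $R(L_{2})$, so there is a (not necessarily unique) preimage; choosing the preimage of minimal norm, i.e. the one in $N(L_{2})^{\perp}=\overline{R(L_{2}^{*})}$, defines a map $Z:\mathcal{H}_{1}\to\mathcal{H}_{2}$ with $L_{2}Z=L_{1}$. One shows $Z$ is closed: if $h_{k}\to h$ and $Zh_{k}\to y$, then $L_{1}h_{k}=L_{2}Zh_{k}\to L_{2}y$ and also $L_{1}h_{k}\to L_{1}h$, so $L_{2}y=L_{1}h$; since each $Zh_{k}\in\overline{R(L_{2}^{*})}$ and this subspace is closed, $y\in\overline{R(L_{2}^{*})}$, and minimality of the norm in that subspace forces $y=Zh$. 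By the closed graph theorem $Z$ is bounded, and then
\begin{eqnarray*}
L_{1}L_{1}^{*}=L_{2}ZZ^{*}L_{2}^{*}\le\|Z\|^{2}\,L_{2}L_{2}^{*},
\end{eqnarray*}
which is $(ii)$ with $\lambda=\|Z\|$. (In the finite-dimensional setting of this paper the closed graph theorem is automatic, so this step simplifies considerably, but I would state it in the generality above.)

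Finally, for uniqueness and properties (b)--(c): any $X$ with $L_{2}X=L_{1}$ and $R(X)\subseteq\overline{R(L_{2}^{*})}$ must agree with the operator $Y^{*}=Z$ constructed above, because $L_{2}$ is injective on $\overline{R(L_{2}^{*})}=N(L_{2})^{\perp}$, so $L_{2}(X-Z)=0$ together with $R(X-Z)\subseteq\overline{R(L_{2}^{*})}$ forces $X=Z$. Property (b), $N(L_{1})=N(X)$, then follows: $Xh=0\Rightarrow L_{1}h=L_{2}Xh=0$, and conversely $L_{1}h=0\Rightarrow Xh\in N(L_{2})\cap\overline{R(L_{2}^{*})}=\{0\}$. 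This completes the proof.
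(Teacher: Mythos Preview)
The paper does not supply a proof of this statement: Theorem~\ref{equ0} is quoted as Douglas' classical factorization theorem with a citation to \cite{Douglas}, and is used as a black box in the subsequent arguments (e.g.\ in Lemma~\ref{1}). There is therefore no ``paper's own proof'' to compare against.

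That said, your argument is the standard one and is correct. A couple of small polishing points: in the step $(i)\Rightarrow(ii)$, when you verify that $Z$ is closed, the phrase ``minimality of the norm in that subspace forces $y=Zh$'' is slightly imprecise---the point is simply that the preimage of $L_{1}h$ lying in $N(L_{2})^{\perp}$ is \emph{unique} (if $L_{2}y_{1}=L_{2}y_{2}$ with $y_{1},y_{2}\in N(L_{2})^{\perp}$ then $y_{1}-y_{2}\in N(L_{2})\cap N(L_{2})^{\perp}=\{0\}$), so $y=Zh$ follows immediately once you know $L_{2}y=L_{1}h$ and $y\in\overline{R(L_{2}^{*})}$. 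Also, the inequality $L_{2}ZZ^{*}L_{2}^{*}\le\|Z\|^{2}L_{2}L_{2}^{*}$ deserves one line of justification: it holds because $ZZ^{*}\le\|Z\|^{2}I$ and conjugation $A\mapsto L_{2}AL_{2}^{*}$ preserves the operator order. With those clarifications the proof is complete, and in the finite-dimensional setting actually used in the paper it is, as you note, even simpler.
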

For every $K$-frame $F=\{f_{i}\}_{i\in I_{m}}$ of $\mathcal{H}_{n}$
using the Douglas' theorem, there exists a unique operator $X_{F}\in B(\mathcal{H}_{n}, \mathbb{C}^{m})$ so that $\theta_{F}X_{F}=K$ and
\begin{eqnarray}\label{XW}
\Vert X_{F}\Vert ^{2} = \inf \{\alpha>0, \Vert K^{*}f\Vert ^{2}\leq \alpha\Vert \theta_{F}^{*}f\Vert ^{2}; f\in \mathcal{H}_{n}\}.
\end{eqnarray}
Moreover, $\{X_{F}^{*}\delta_{i}\}_{i\in I_{m}}$ is a $K$-dual of $F$ which its analysis operator obtains the minimal norm and is called the canonical $K$-dual. See \cite{Guo}.
For further information in $K$-frame theory  we refer the reader  to  \cite{arefi3, Han, Gav07, Gav08,  Xiao}.

Throughout this paper, we suppose that $\mathcal{H}_{n}$ is an $n$-dimensional  Hilbert space, $I_{m}=\{1, 2, ..., m\}$ and     $\{\delta_{i}\}_{i\in I_{m}}$ is the standard orthonormal basis of $l^{2}(I_{m})$.  For two Hilbert spaces $\mathcal{H}_{1}$ and $\mathcal{H}_{2}$, we denote by $B(\mathcal{H}_{1},\mathcal{H}_{2})$ the collection of all bounded linear operators between $\mathcal{H}_{1}$ and $\mathcal{H}_{2}$, and we abbreviate $B(\mathcal{H},\mathcal{H})$ by $B(\mathcal{H})$. Matrix representation associated with an operator $T$ is denoted by $M_{T}$ and the operator associated with a matrix $M$ is denoted by $T_{M}$.
 Also, we denote the range of $K\in B(\mathcal{H}_{n})$ by $R(K)$ and pseudo inverse of $K$ by $K^{\dagger}$. For a  subspace $V \subseteq \mathcal{H}_{n}$  the identity operator on $V$  and the orthogonal projection of $\mathcal{H}$ onto $V$  are denoted by   $I_{V}$ and  $\pi_{V}$, respectively.

%\subsection{$K$-frames}

 \section{Identification of the canonical $K$-dual}
In this section, we are going to obtain more structures of $K$-duals and particularly the canonical $K$-dual of   $K$-frames. For convenience, we denote the set of all  $K$-dual frames of $F=\{f_{i}\}_{i\in I_{m}}$ by $KD_{F}$. Obviously, $KD_{F}$ is a closed convex subset of $\mathcal{H}_{n}^{m}$, the set of all $m$-tuples of vectors in $\mathcal{H}_{n}$.
In the following, we    obtain the canonical $K$-dual in a new form which is more useful in the proof of our results.

%We can state the canonical dual in the other version which is more useful in the proof of our results.
\begin{lem}\label{1}
Let $F=\{f_{i}\}_{i\in I_{m}}$ be a $K$-frame of $\mathcal{H}_{n}$. With the above notations, there exists a unique bounded operator $\Gamma_{F}\in B(\mathcal{H}_{n})$ so that $\{\Gamma_{F}^{*}f_{i}\}_{i\in I_{m}}=\{X_{F}^{*}\delta_{i}\}_{i\in I_{m}}$.
\end{lem}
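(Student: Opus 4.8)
The plan is to convert the claimed identity between the two sequences into a single operator equation and then read off both existence and uniqueness of $\Gamma_F$ from Douglas' theorem (Theorem~\ref{equ0}). Since $f_i=\theta_F\delta_i$ for every $i\in I_m$ and $\{\delta_i\}_{i\in I_m}$ is an orthonormal basis of $l^2(I_m)$, the requirement $\Gamma_F^*f_i=X_F^*\delta_i$ $(i\in I_m)$ is equivalent to the operator identity $\Gamma_F^*\theta_F=X_F^*$ on $l^2(I_m)$, hence, passing to adjoints, to
\begin{equation*}
\theta_F^*\,\Gamma_F=X_F .
\end{equation*}
Thus it suffices to show that this equation has a unique solution $\Gamma_F\in B(\mathcal H_n)$.

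For existence I would apply Theorem~\ref{equ0} with $L_1=X_F\in B(\mathcal H_n,\mathbb C^m)$ and $L_2=\theta_F^*\in B(\mathcal H_n,\mathbb C^m)$; its clause (iii) then delivers an operator $X=:\Gamma_F\in L(\mathcal H_n,\mathcal H_n)=B(\mathcal H_n)$ with $X_F=\theta_F^*\Gamma_F$, once we verify the hypothesis $R(X_F)\subseteq R(\theta_F^*)$. This inclusion is precisely the information left over from the construction of $X_F$ itself: $X_F$ was produced by Douglas' theorem applied to the $K$-frame inclusion $R(K)\subseteq R(\theta_F)$, so clause (c) there gives $R(X_F)\subseteq\overline{R(\theta_F^*)}$, and in the present finite-dimensional setting $\overline{R(\theta_F^*)}=R(\theta_F^*)$. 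One can also exhibit $\Gamma_F$ explicitly: taking $\Gamma_F:=S_F^{\dagger}K$ with $S_F=\theta_F\theta_F^*$, we get $\theta_F(\theta_F^*S_F^{\dagger}K)=S_FS_F^{\dagger}K=\pi_{R(S_F)}K=\pi_{R(\theta_F)}K=K$ because $R(K)\subseteq R(\theta_F)=R(S_F)$, while $R(\theta_F^*S_F^{\dagger}K)\subseteq R(\theta_F^*)$; the uniqueness clause of Douglas' theorem for $X_F$ then forces $\theta_F^*\Gamma_F=\theta_F^*S_F^{\dagger}K=X_F$, so this $\Gamma_F$ works and, incidentally, $\Gamma_F^*=K^*S_F^{\dagger}$.

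For uniqueness I would again invoke the uniqueness part of Theorem~\ref{equ0}: among the solutions of $\theta_F^*\Gamma_F=X_F$ there is exactly one with $R(\Gamma_F)\subseteq\overline{R(\theta_F)}$, and it moreover satisfies $N(\Gamma_F)=N(X_F)=N(K)$. The one point I expect to need care is exactly this uniqueness statement, since without a range condition any $\Gamma_F+\Delta$ with $R(\Delta)\subseteq R(\theta_F)^{\perp}=N(\theta_F^*)$ solves the same equation; uniqueness in $B(\mathcal H_n)$ is genuine because we keep the canonical normalization $R(\Gamma_F)\subseteq R(\theta_F)$ supplied by Douglas (which is vacuous precisely when $\mathrm{span}\{f_i\}_{i\in I_m}=\mathcal H_n$, i.e.\ when $F$ is an ordinary frame, recovering $\Gamma_F^*=S_F^{-1}$). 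Everything else — the reduction to $\theta_F^*\Gamma_F=X_F$ and the two small range computations — is routine, so the real content of the argument is bookkeeping of which Douglas inclusion feeds which application.
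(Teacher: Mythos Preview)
Your proof is correct and follows essentially the same route as the paper: reduce the sequence identity to the operator equation $\theta_F^*\Gamma_F=X_F$, observe that $R(X_F)\subseteq R(\theta_F^*)$ from the Douglas construction of $X_F$, and then apply Douglas' theorem a second time to obtain the (canonically normalized) $\Gamma_F$. Your explicit formula $\Gamma_F=S_F^{\dagger}K$ and your careful remark that uniqueness requires the range normalization $R(\Gamma_F)\subseteq R(\theta_F)$ (which the paper leaves implicit) are welcome additions rather than deviations.
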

\begin{proof}
Using   Douglas' theorem,  there is  a unique operator $X_{F}\in B(\mathcal{H}_{n},l^{2}(I_{m}))$ so that $\theta_{F}X_{F}=K$  and $R(X_{F})\subseteq \overline{R(\theta_{F}^{*})}=R(\theta_{F}^{*})$. So by reusing   Douglas' theorem there exists a unique bounded operator $\Gamma_{F}\in B(\mathcal{H}_{n})$ so that $X_{F}=\theta_{F}^{*}\Gamma_{F}$ and
\begin{eqnarray*}
\Vert \Gamma_{F}\Vert ^{2} = \inf \{\alpha>0, \Vert X_{F}^{*}f\Vert ^{2}\leq \alpha\Vert \theta_{F}f\Vert ^{2}; f\in \mathcal{H}_{n}\}.
\end{eqnarray*}
Moreover,  we have 
\begin{eqnarray*}
\Gamma_{F}^{*}f_{i}=
\Gamma_{F}^{*}\theta_{F}\delta_{i}=X_{F}^{*}\delta_{i},
\end{eqnarray*}
for all $i\in I_{m}$. Hence, $\{\Gamma_{F}^{*}f_{i}\}_{i\in I_{m}}$ is exactly  the canonical $K$-dual of $F$.
\end{proof}
Easily, it can be checked that a sequence $G=\lbrace g_{i}\rbrace_{i \in I_{m}}$ is a $K$-dual of $F$ if and only if   $g_{i} = \Gamma_{F}^{*}f_{i}+u_{i}$, for all $i\in I_{m}$ where $U=\{u_i\}_{i\in I_{m}}$ satisfies $\theta_{F} \theta^{*}_{U}=0$.
\begin{lem}\label{2}
Let $F=\{f_{i}\}_{i\in I_{m}}$ be a $K$-frame of $\mathcal{H}_{n}$ and $G=\{g_{i}\}_{i\in I_{m}}$ be a $K$-dual of $F$. Then $G$ is the canonical $K$-dual  if and only if $S_{G}=\theta_{G} \theta^{*}_{Z}$ for every $K$-dual $Z$ of $F$.
\end{lem}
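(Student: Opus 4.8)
The plan is to exploit the explicit description of the canonical $K$-dual obtained in Lemma~\ref{1}, namely $\theta_{G}=X_{F}^{*}=\Gamma_{F}^{*}\theta_{F}$ with $X_{F}=\theta_{F}^{*}\Gamma_{F}$, together with the parametrization recorded right after Lemma~\ref{1}: a sequence $G$ is a $K$-dual of $F$ exactly when $\theta_{G}=X_{F}^{*}+\theta_{U}$ for some $U$ with $\theta_{F}\theta_{U}^{*}=0$. Throughout I use that $Z$ is a $K$-dual of $F$ iff $\theta_{F}\theta_{Z}^{*}=K$ (the relation $FZ^{*}=M_{K}$).

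For the forward implication, suppose $G$ is the canonical $K$-dual, so $\theta_{G}=\Gamma_{F}^{*}\theta_{F}$. For any $K$-dual $Z$ of $F$ one then computes $\theta_{G}\theta_{Z}^{*}=\Gamma_{F}^{*}\theta_{F}\theta_{Z}^{*}=\Gamma_{F}^{*}K$, which is \emph{independent of $Z$}. Taking $Z=G$ (itself a $K$-dual) gives $S_{G}=\theta_{G}\theta_{G}^{*}=\Gamma_{F}^{*}K$, and hence $S_{G}=\theta_{G}\theta_{Z}^{*}$ for every $K$-dual $Z$.

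For the converse, assume $S_{G}=\theta_{G}\theta_{Z}^{*}$ for every $K$-dual $Z$. Since $G$ is a $K$-dual, write $\theta_{G}=X_{F}^{*}+\theta_{V}$ with $\theta_{F}\theta_{V}^{*}=0$; it suffices to show $\theta_{V}=0$. First I reduce the hypothesis: if $Z$ is a $K$-dual then $\theta_{F}(\theta_{Z}-\theta_{G})^{*}=K-K=0$, and conversely adding to $G$ any sequence $W$ with $\theta_{F}\theta_{W}^{*}=0$ again yields a $K$-dual (in finite dimensions every sequence is Bessel). Thus $S_{G}=\theta_{G}\theta_{G}^{*}=\theta_{G}\theta_{Z}^{*}$ for all $K$-duals $Z$ is equivalent to $\theta_{G}\theta_{W}^{*}=0$ for all $W$ with $\theta_{F}\theta_{W}^{*}=0$. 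Now specialize to $W=V$, which is admissible: $0=\theta_{G}\theta_{V}^{*}=(X_{F}^{*}+\theta_{V})\theta_{V}^{*}$. Because $X_{F}^{*}=\Gamma_{F}^{*}\theta_{F}$, the cross term vanishes, $X_{F}^{*}\theta_{V}^{*}=\Gamma_{F}^{*}(\theta_{F}\theta_{V}^{*})=0$, leaving $\theta_{V}\theta_{V}^{*}=0$, hence $\theta_{V}=0$. Therefore $\theta_{G}=X_{F}^{*}$, i.e. $G$ is the canonical $K$-dual.

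I do not anticipate a genuine obstacle. The only step needing a little care is the reduction in the converse — verifying that the differences of $K$-duals of $F$ range exactly over the sequences $W$ killed by $\theta_{F}$ on the right, so that the quantifier ``for every $K$-dual $Z$'' may be replaced by ``for every such $W$'' — and then making the economical choice $W=V$ instead of arguing with a general $Z$; the remaining manipulations are immediate from Lemma~\ref{1} and the identity $X_{F}^{*}=\Gamma_{F}^{*}\theta_{F}$.
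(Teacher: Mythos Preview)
Your forward implication is essentially the same computation as the paper's, just phrased in a slightly different order: the paper writes $\theta_{G}(\theta_{G}^{*}-\theta_{Z}^{*})=\Gamma_{F}^{*}\theta_{F}(\theta_{G}^{*}-\theta_{Z}^{*})=0$, while you compute $\theta_{G}\theta_{Z}^{*}=\Gamma_{F}^{*}K$ and note this is independent of $Z$. These are the same.

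Your converse, however, takes a genuinely different route. The paper argues via norms: from $\|\theta_{G}^{*}\|^{2}=\|S_{G}\|=\|\theta_{G}\theta_{Z}^{*}\|\le\|\theta_{G}^{*}\|\,\|\theta_{Z}^{*}\|$ it concludes $\|\theta_{G}^{*}\|\le\|\theta_{Z}^{*}\|$ for every $K$-dual $Z$, and then invokes the minimal-norm characterization of the canonical $K$-dual (recorded in the paragraph around (\ref{XW}) and the reference to Guo). You instead use the parametrization after Lemma~\ref{1}, translate the hypothesis into $\theta_{G}\theta_{W}^{*}=0$ for every $W$ with $\theta_{F}\theta_{W}^{*}=0$, specialize to $W=V$, and force $\theta_{V}\theta_{V}^{*}=0$. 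Your argument is a bit more self-contained: it stays entirely within the algebra set up by Lemma~\ref{1} and yields $\theta_{G}=X_{F}^{*}$ directly, rather than appealing to the extremal norm property and its (tacitly assumed) uniqueness. The paper's argument is shorter but leans on that external characterization; yours is longer but makes the identification explicit. Both are correct.
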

\begin{proof}
 Suppose $G$ is the canonical $K$-dual  and $Z$ is a $K$-dual of $F$, so by lemma \ref{1}  there exists a unique bounded operator $\Gamma_{F}\in B(\mathcal{H}_{n})$, so that $G=\{\Gamma_{F}^{*}f_{i}\}_{i\in I_{m}}$ so,
\begin{eqnarray*}
\theta_{G}(\theta_{G}^{*}- \theta_{Z}^{*})=\Gamma_{F}^{*}\theta_{F}(\theta_{G}^{*}- \theta
_{Z}^{*})=0.
\end{eqnarray*}
Thus,  $S_{G}=\theta_{G} \theta^{*}_{Z}$ for every $K$-dual $Z$ of $F$. Conversely, if  for every $K$-dual $Z$ of $F$ we have $S_{G}=\theta_{G} \theta^{*}_{Z}$. Then
\begin{eqnarray*}
\Vert \theta_{G}^{*}\Vert^{2}=\Vert \theta_{G}\theta_{G}^{*}\Vert
=\Vert \theta_{G} \theta_{Z}^{*}\Vert\leq \Vert  \theta^{*}_{G}\Vert \Vert  \theta_{Z}^{*}\Vert.
\end{eqnarray*}
This immediately implies that $\Vert \theta_{G}^{*}\Vert \leq \Vert  \theta_{Z}^{*}\Vert$, i.e., the analysis operator of $G$ has minimal norm and the  proof is complete.
\end{proof}
In the case that $F$ is a Parseval $K$-frame Lemma \ref{2} can be  reduced to a result of \cite{Xiang}. This result helps us to obtain the canonical $K$-dual of some classes of $K$-frames.
\begin{thm}\label{1thm}
Let $F=\{f_{i}\}_{i\in I_{m}}$ be a $K$-frame of $\mathcal{H}_{n}$. Then the followings hold;
\begin{itemize}
\item[(i)]
If $F\subseteq R(K)$  then $\{K^{*}(S_{F}\vert_{R(K)})^{-1}\pi_{S_{F}(R(K))}f_{i}\}_{i\in I_{m}}$ is the canonical $K$-dual of $F$.
\item[(ii)] Either $R(K)\subseteq S_{F}(R(K))$ or $F\subseteq S_{F}(R(K))$ implies that the Bessel  sequence $\{K^{*}((S_{F}\vert_{R(K)})^{-1})^{*}\pi_{R(K)}f_{i}\}_{i\in I_{m}}$ is the canonical $K$-dual of $F$.
\end{itemize}
\end{thm}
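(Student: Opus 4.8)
The plan is to exploit the description of the canonical $K$-dual coming from Douglas' theorem (Theorem~\ref{equ0}): its analysis operator is the \emph{unique} operator $X\in B(\mathcal{H}_n,l^{2}(I_m))$ satisfying $\theta_F X=K$ together with $R(X)\subseteq\overline{R(\theta_F^*)}=R(\theta_F^*)$, uniqueness holding because the difference of two such operators has range inside $N(\theta_F)\cap N(\theta_F)^{\perp}=\{0\}$. Hence, to show that a Bessel sequence of the form $g_i=Cf_i$ with $C\in B(\mathcal{H}_n)$ is the canonical $K$-dual, it is enough to check two things. First, $R(\theta_G^*)\subseteq R(\theta_F^*)$; this is automatic, since $\langle f,g_i\rangle=\langle C^*f,f_i\rangle$ gives $\theta_G^*=\theta_F^*C^*$, hence $\theta_F\theta_G^*=S_FC^*$. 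Second, $G$ is a $K$-dual, i.e. $S_FC^*=K$, equivalently the reconstruction identity $\sum_{i\in I_m}\langle f,g_i\rangle f_i=Kf$ holds for all $f$. So the whole proof reduces to verifying this identity for the two displayed operators $C$.

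For (i): I would first observe that $F\subseteq R(K)$ forces $R(\theta_F)=\mathrm{span}\{f_i\}\subseteq R(K)$, while the $K$-frame condition gives $R(K)\subseteq R(\theta_F)$; hence $R(\theta_F)=R(K)$, so $R(S_F)=R(\theta_F)=R(K)$ and $N(S_F)=R(K)^{\perp}$. Consequently $S_F\vert_{R(K)}\colon R(K)\to R(K)$ is injective (its kernel is $N(S_F)\cap R(K)=\{0\}$), hence bijective, and a dimension count gives $S_F(R(K))=R(K)$, so $\pi_{S_F(R(K))}=\pi_{R(K)}$; moreover $S_F\vert_{R(K)}$, and therefore its inverse, is self-adjoint and positive on $R(K)$. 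Writing $C=K^*(S_F\vert_{R(K)})^{-1}\pi_{R(K)}$, one computes $C^*=\pi_{R(K)}(S_F\vert_{R(K)})^{-1}K$, and since $Kf\in R(K)$ and $(S_F\vert_{R(K)})^{-1}Kf\in R(K)$ we get $S_FC^*f=S_F\vert_{R(K)}(S_F\vert_{R(K)})^{-1}Kf=Kf$. This is the reconstruction identity, so $G$ is the canonical $K$-dual.

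For (ii): the key point is that either hypothesis forces $R(K)\subseteq S_F(R(K))$. Indeed, if $F\subseteq S_F(R(K))$ then $R(\theta_F)\subseteq S_F(R(K))\subseteq S_F(\mathcal{H}_n)=R(S_F)=R(\theta_F)$, whence $S_F(R(K))=R(\theta_F)\supseteq R(K)$; and $R(K)\subseteq S_F(R(K))$ is the other hypothesis itself. In either case $\pi_{S_F(R(K))}K=K$ because $Kf\in R(K)\subseteq S_F(R(K))$. Now $S_F\vert_{R(K)}\colon R(K)\to S_F(R(K))$ is again bijective (injectivity as above, surjectivity by definition of the image), but this time $T^{-1}:=(S_F\vert_{R(K)})^{-1}\colon S_F(R(K))\to R(K)$ and its adjoint $(T^{-1})^*\colon R(K)\to S_F(R(K))$ need not agree, which is precisely why the formula uses $((S_F\vert_{R(K)})^{-1})^*$. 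Tracking the inclusions $R(K)\hookrightarrow\mathcal{H}_n$ and $S_F(R(K))\hookrightarrow\mathcal{H}_n$, whose adjoints are $\pi_{R(K)}$ and $\pi_{S_F(R(K))}$, one obtains $C^*=T^{-1}\pi_{S_F(R(K))}K$ (with values in $R(K)$), so that $S_FC^*f=S_F\vert_{R(K)}\,T^{-1}\pi_{S_F(R(K))}Kf=S_F\vert_{R(K)}\,T^{-1}Kf=Kf$, using $\pi_{S_F(R(K))}K=K$ and $T^{-1}Kf\in R(K)$. Hence the reconstruction identity holds and $G$ is the canonical $K$-dual. Note that (i) is just the special case $F\subseteq S_F(R(K))$ of (ii): there $S_F(R(K))=R(K)$, $S_F\vert_{R(K)}$ is self-adjoint, and the two stated formulas coincide.

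I expect the only real obstacle to be the bookkeeping of restrictions, one-sided inverses and adjoints: keeping straight the domains and codomains of $S_F\vert_{R(K)}$, of $(S_F\vert_{R(K)})^{-1}$ and of $((S_F\vert_{R(K)})^{-1})^*$, and hence knowing exactly which of $\pi_{R(K)}$ and $\pi_{S_F(R(K))}$ appears when these restricted maps are turned into genuine operators on $\mathcal{H}_n$. The genuinely frame-theoretic content — reducing ``canonical $K$-dual'' to the reconstruction identity via Douglas uniqueness, and the identity $\theta_G^*=\theta_F^*C^*$ when $g_i=Cf_i$ — is short; the remaining work is extracting $S_F(R(K))=R(K)$ (in (i)) and $R(K)\subseteq S_F(R(K))$ (in (ii)) from the hypotheses, and the routine computation $S_FC^*=K$.
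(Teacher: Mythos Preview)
Your proof is correct but takes a genuinely different route from the paper. The paper verifies canonicity via Lemma~\ref{2}: after checking that $G$ is a $K$-dual (for (i) this is delegated to an external reference, for (ii) it is computed directly), it shows $S_G=\theta_G\theta_Z^*$ for every $K$-dual $Z$ by a chain of equalities exploiting $\theta_F\theta_Z^*=K$. You instead go back to the Douglas characterization: the canonical $K$-dual is the unique $K$-dual whose analysis operator has range in $R(\theta_F^*)$, and your key observation is that $g_i=Cf_i$ forces $\theta_G^*=\theta_F^*C^*$, so the range condition is automatic and the entire problem collapses to verifying $S_FC^*=K$. This is cleaner and fully self-contained; it also makes transparent why both formulas have the shape ``operator applied to $f_i$'' (this is exactly the content of Lemma~\ref{1}, which you are in effect re-deriving). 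Your observation that (i) is the special case of (ii) where $S_F(R(K))=R(K)$ and hence $(S_F\vert_{R(K)})^{-1}$ is self-adjoint is a nice bonus the paper does not make explicit. The paper's approach, on the other hand, illustrates the utility of Lemma~\ref{2} and ties the result more tightly to the surrounding development.
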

\begin{proof}
To show $(i)$, first  we note that by the assumption $F\subseteq R(K)$ is a $K$-frame of $\mathcal{H}_{n}$ and so   $G:=\{K^{*}(S_{F}\vert_{R(K)})^{-1}\pi_{S_{F}(R(K))}f_{i}\}_{i\in I_{m}}$ is a $K$-dual of $\pi_{R(K)}F=F$ by using  Proposition 2.3 of \cite{arefi3}. Moreover,
\begin{eqnarray*}
S_{G}&=&  K^{*}(S_{F}\vert_{R(K)})^{-1}\pi_{S_{F}(R(K))}S_{F}((S_{F}\vert_{R(K)})^{-1})^{*}K\\
 &=&  K^{*}(S_{F}\vert_{R(K)})^{-1}\pi_{S_{F}(R(K))}\pi_{R(K)}S_{F}((S_{F}\vert_{R(K)})^{-1})^{*}K\\
&=& K^{*}(S_{F}\vert_{R(K)})^{-1}\pi_{S_{F}(R(K))}(S_{F}\vert_{R(K)})^{*}((S_{F}\vert_{R(K)})^{-1})^{*}K\\
&=& K^{*}(S_{F}\vert_{R(K)})^{-1}\pi_{S_{F}(R(K))}K\\
&=& K^{*}(S_{F}\vert_{R(K)})^{-1}\pi_{S_{F}(R(K))}\theta_{F} \theta_{Z}^{*}\\
&=&\theta_{G}  \theta_{Z}^{*},
\end{eqnarray*}
for every $K$-dual $Z$ of $F$.
Now, assume that $R(K)\subseteq S_{F}(R(K))$. Since the operator $S_{F}\vert_{R(K)} :R(K) \rightarrow S_{F}(R(K))$ is  invertible so $R(K)= S_{F}(R(K))$. Thus
\begin{eqnarray*}
Kf&=&  S_{F}\vert_{R(K)}  (S_{F}\vert_{R(K)})^{-1}Kf\\
 &=&  \sum_{i\in I_{m}}\langle (S_{F}\vert_{R(K)})^{-1}Kf,f_{i}\rangle f_{i}\\
&=& \sum_{i\in I_{m}}\langle f,K^{*}((S_{F}\vert_{R(K)})^{-1})^{*}\pi_{(R(K))}f_{i}\rangle f_{i},
\end{eqnarray*}
for every $f\in \mathcal{H}_{n}$. Hence, $G:=\{K^{*}((S_{F}\vert_{R(K)})^{-1})^{*}\pi_{R(K)}f_{i}\}_{i\in I_{m}}$ is a $K$-dual of $F$.  Moreover,
\begin{eqnarray*}
S_{G} &=& K^{*}((S_{F}\vert_{R(K)})^{-1})^{*}K\\
&=&K^{*}((S_{F}\vert_{R(K)})^{-1})^{*}\pi_{R(K)}K\\
&=&K^{*}((S_{F}\vert_{R(K)})^{-1})^{*}\pi_{R(K)}\theta_{F}  \theta_{Z}^{*}\\
&=& \theta_{G}  \theta_{Z}^{*},
\end{eqnarray*}
for every $K$-dual $Z$ of $F$. The above computations along with Lemma \ref{2}, follows the desired result.
Finally, if $F\subseteq S_{F}(R(K))$ is a $K$-frame. Then, we have $R(K)\subseteq  \textit{span}\{f_{i}\}_{i\in I_{m}} \subseteq S_{F}(R(K))$ that is similar to the previous case. Thus,  the proof is complete.
\end{proof}
The converse of Theorem \ref{1thm}, does not hold in general. To see this   and also
 the importance of the sufficiency conditions in Theorem \ref{1thm},   see  Example \ref{1ex1} and Example \ref{ex1} in Section 6.

\begin{rem}\label{3.4./}
The structure of the canonical $K$-dual of a Parseval $K$-frame $F$ is $K^{\dagger}F$. See \cite{Miao}. Indeed, in this case $\Gamma_{F}=(K^{\dagger})^{*}$. Also, in this regard, for a $K$-frame  $F\subseteq R(K)$ we have that $\Gamma_{F}=((S_{F}\vert_{R(K)})^{-1})^{*}K$.\end{rem}

%\sum_{i=1}^{n}\pi_{K^{*}W_{i}}f \rightarrow \infty, \quad (n\rightarrow \infty),

 \section{Minimal redundancy condition}
In this section, we provide the concept of   minimal redundancy condition  and maximal robust   for $K$-frames
 and give some necessary conditions for  a finite set of indices  which satisfies  minimal redundancy condition.  Then, we discuss   the problem of  robustness  under operator perturbation of $K$-frame, particularly when the erasure set satisfies  the  minimal redundancy condition or the  $K$-frame is maximal robust. For more information of these  concepts  on  classical frames we refer the reader to \cite{Alexeev, Arambasic, Kovaevi, larson}

Suppose $F$ denotes the associated matrix of a $K$-frame $\{f_{i}\}_{i\in I_{m}}$ in Hilbert space $\mathcal{H}_{n}$.  A finite set of indices $\sigma\subset I_{m}$ satisfies the minimal redundancy condition (MRC) for $F$ whenever $\{f_{i}\}_{i\in \sigma^{c}}$ is  a $K$-frame for $\mathcal{H}_{n}$. Furthermore,  we say $F$ satisfies MRC for $r$-erasures if every subset $\sigma\subset I_{m}$, $\vert \sigma\vert=r$ satisfies MRC for $F$. Also,  $F$ is said to be of uniform excess $r$ if  it is an exact $K$-frame when $r$ columns of $F$ are removed
 and $F$ is called maximal robust (MR) if every $r_{k}$ columns of $F$ is an exact $K$-frame, where $r_{k}:=rank K$.
Note that, for a $K$-frame that is MR, every submatrix $n \times r_{k}$ has a left inverse. However, the converse does not hold, in general. For instance, in Example \ref{ex1}, $rank K=2$ and every $2$ columns of $F$ are linearly independent so every submatrix of $F$ containing $2$ columns has a left inverse,  but $\{f_{3},f_{4}\}$ is not a $K$-frame.
In what follows, we give some necessary conditions for  a finite set of indices $\sigma\subset I_{m}$ which satisfies MRC. To be convenient,  we use  $\theta_{\sigma}$, $S_{\sigma}$ and $\xi^{*}_{\sigma}$ to denote the synthesis operator,
frame operator of a $K$-frame and  the analysis operator of the canonical $K$-dual  whenever the index set is limited to $\sigma$.

 \begin{thm}\label{fthm}
Suppose $F=\{f_{i}\}_{i\in I_{m}}$ is a $K$-frame of $\mathcal{H}_{n}$ and $\sigma\subset I_{m}$ satisfies MRC for $F$. Then
\begin{itemize}
\item[(i)]
$R(\theta_{F}^{*}K)\cap \textit{span}\{\delta_{i}\}_{i\in \sigma}=\{0\}$.
\item[(ii)] If $F$ is a Parseval $K$-frame then $\left(K-\theta_{\sigma}\xi_{\sigma}^{*} \right)\vert_{R(K^{\dagger})}$ is an invertible operator from $R(K^{\dagger})$ onto $S_{\sigma^{c}}(R(K))$.
\end{itemize}
\end{thm}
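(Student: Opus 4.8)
The plan is to treat (i) and (ii) separately; in both the engine is the lower $K$-frame inequality enjoyed by the surviving family $\{f_i\}_{i\in\sigma^c}$, which is a $K$-frame precisely because $\sigma$ satisfies MRC. For (i), take $c\in R(\theta_F^{*}K)\cap\textit{span}\{\delta_i\}_{i\in\sigma}$ and write $c=\theta_F^{*}Kf$ with $f\in\mathcal H_n$; the $i$-th coordinate of $c$ is $\langle Kf,f_i\rangle$, so membership in $\textit{span}\{\delta_i\}_{i\in\sigma}$ forces $\langle Kf,f_i\rangle=0$ for all $i\in\sigma^c$, whence $\sum_{i\in\sigma^c}|\langle Kf,f_i\rangle|^{2}=0$. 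Feeding the vector $Kf$ into the lower bound $A$ of the $K$-frame $\{f_i\}_{i\in\sigma^c}$ gives $A\|K^{*}Kf\|^{2}\le 0$, so $K^{*}Kf=0$ and therefore $\|Kf\|^{2}=\langle K^{*}Kf,f\rangle=0$, i.e. $Kf=0$; hence $c=\theta_F^{*}Kf=0$. The only point to be careful about is applying the frame inequality to $Kf$, not to $f$.

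For (ii), write $\{g_i\}_{i\in I_m}$ for the canonical $K$-dual and split its reconstruction identity along $\sigma$ and $\sigma^c$:
\[
Kf=\sum_{i\in I_m}\langle f,g_i\rangle f_i=\theta_\sigma\xi_\sigma^{*}f+\theta_{\sigma^c}\xi_{\sigma^c}^{*}f,\qquad f\in\mathcal H_n,
\]
so that $K-\theta_\sigma\xi_\sigma^{*}=\theta_{\sigma^c}\xi_{\sigma^c}^{*}$ as operators. Since $F$ is Parseval, Remark \ref{3.4./} gives $g_i=K^{\dagger}f_i$, hence $\xi_{\sigma^c}^{*}=\theta_{\sigma^c}^{*}(K^{\dagger})^{*}$ and
\[
K-\theta_\sigma\xi_\sigma^{*}=\theta_{\sigma^c}\theta_{\sigma^c}^{*}(K^{\dagger})^{*}=S_{\sigma^c}(K^{\dagger})^{*}.
\]
Thus it suffices to show $S_{\sigma^c}(K^{\dagger})^{*}$ restricts to a bijection from $R(K^{\dagger})$ onto $S_{\sigma^c}(R(K))$, which I would verify factor by factor. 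Using standard Moore--Penrose identities, $R(K^{\dagger})=R(K^{*})=N(K)^{\perp}$, $N((K^{\dagger})^{*})=R(K^{\dagger})^{\perp}=N(K)$ and $R((K^{\dagger})^{*})=R(K)$, so $(K^{\dagger})^{*}$ maps $R(K^{\dagger})$ bijectively onto $R(K)$. On the other hand, since $\{f_i\}_{i\in\sigma^c}$ is a $K$-frame, its frame operator restricts to an invertible map $S_{\sigma^c}|_{R(K)}\colon R(K)\to S_{\sigma^c}(R(K))$ (closed range of $K$ as recalled after $(\ref{001})$; injectivity also follows from $\langle S_{\sigma^c}g,g\rangle=\|\theta_{\sigma^c}^{*}g\|^{2}\ge A\|K^{*}g\|^{2}$ together with $R(K)\cap N(K^{*})=\{0\}$). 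Composing these two bijections yields exactly the assertion.

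The routine parts are the coordinate computations and the pseudoinverse bookkeeping; the one place to be careful is matching domain and codomain, i.e. checking that $(K^{\dagger})^{*}$ sends $R(K^{\dagger})$ \emph{onto all of} $R(K)$ — for which one uses that $(K^{\dagger})^{*}$ annihilates only $N(K)=R(K^{\dagger})^{\perp}$ — so that $S_{\sigma^c}$ then hits precisely $S_{\sigma^c}(R(K))$. No analytic estimate beyond the lower $K$-frame bound of the reduced system is needed.
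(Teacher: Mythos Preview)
Your proof is correct and follows essentially the same route as the paper: part (i) is identical (phrased as a contradiction there, directly by you), and in (ii) both arguments reduce via Remark~\ref{3.4./} to showing that $S_{\sigma^c}(K^\dagger)^{*}\vert_{R(K^\dagger)}$ is a bijection onto $S_{\sigma^c}(R(K))$. The only difference is cosmetic: the paper exhibits the explicit inverse $K^{*}(S_{\sigma^c}\vert_{R(K)})^{-1}$ and checks both compositions, whereas you obtain invertibility by factoring the map as the composition of the bijections $(K^\dagger)^{*}\colon R(K^\dagger)\to R(K)$ and $S_{\sigma^c}\vert_{R(K)}\colon R(K)\to S_{\sigma^c}(R(K))$.
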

\begin{proof}
To show $(i)$, on the contrary, assume that  there exists a non-zero element $\alpha \in R(\theta_{F}^{*}K)\cap \textit{span}\{\delta_{i}\}_{i\in \sigma}$. Then, there exists $f\in \mathcal{H}_{n}$, $\{c_{i}\}_{i\in \sigma}\subseteq \mathbb{C}$ so that
\begin{eqnarray*}
\alpha=\theta_{F}^{*}Kf=\sum_{i \in \sigma}c_{i}\delta_{i}.
\end{eqnarray*}
Thus, $\theta_{F}^{*}Kf \perp \delta_{i}$ for every $i\in \sigma^{c}$ and so
\begin{eqnarray*}
\langle Kf,f_{i}\rangle = \langle Kf,\theta_{F}\delta_{i}\rangle = \langle \theta_{F}^{*} Kf,\delta_{i}\rangle =0,
\end{eqnarray*}
for every $i\in \sigma^{c}$. Hence  $Kf \perp \{f_{i}\}_{i \in \sigma^{c}}$ that is a contradiction. This follows the desired result.

Now, let $F$ be a  Parseval $K$-frame then
\begin{eqnarray*}
K-\theta_{\sigma}\xi_{\sigma}^{*} =\theta_{\sigma^{c}}\xi_{\sigma^{c}}^{*}=S_{\sigma^{c}}  (K^{\dagger})^{*},
\end{eqnarray*}
where the last equality is obtained by Remark \ref{3.4./} so it is sufficient to prove that $S_{\sigma^{c}}  (K^{\dagger})^{*}\vert_{R(K^{\dagger})}$ is an invertible operator. Since $\sigma$ satisfies MRC the operator $S_{\sigma^{c}}\vert_{R(K)}$ is  invertible from $R(K)$ onto $S_{\sigma^{c}}(R(K))$. Consider $\Gamma_{\sigma^{c}}:=K^{*}(S_{\sigma^{c}}\vert_{R(K)})^{-1}$, we show that $\Gamma_{\sigma^{c}}$ is the inverse of the operator $S_{\sigma^{c}}  (K^{\dagger})^{*}\vert_{R(K^{\dagger})}$. Indeed
\begin{eqnarray*}
&&\Gamma_{\sigma^{c}}S_{\sigma^{c}} (K^{\dagger})^{*}f\\
 &=& K^{*}(S_{\sigma^{c}}\vert_{R(K)})^{-1}S_{\sigma^{c}}\vert_{R(K)} (K^{\dagger})^{*}f\\
&=& K^{*}(K^{\dagger})^{*}f \\
&=& (K^{\dagger}K)^{*}f\\
&=& K^{\dagger}Kf =f,
\end{eqnarray*}
for every $f\in R(K^{\dagger})$. Thus $\Gamma_{\sigma^{c}}S_{\sigma^{c}} (K^{\dagger})^{*}\vert_{R(K^{\dagger})}=I_{{R(K^{\dagger})}}$. On the other hand,
 \begin{eqnarray*}
&&S_{\sigma^{c}} (K^{\dagger})^{*}\Gamma_{\sigma^{c}}f\\
 &=& S_{\sigma^{c}} (K^{\dagger})^{*}K^{*}(S_{\sigma^{c}}\vert_{R(K)})^{-1}f\\
%&=& K^{*}(K^{\dagger})^{*}f \\
&=&  S_{\sigma^{c}} K K^{\dagger}(S_{\sigma^{c}}\vert_{R(K)})^{-1}f\\
&=& S_{\sigma^{c}}\vert_{R(K)}(S_{\sigma^{c}}\vert_{R(K)})^{-1}f=f,
\end{eqnarray*}
for every $f\in S_{\sigma^{c}}(R(K))$. Hence, $S_{\sigma^{c}} (K^{\dagger})^{*}\Gamma_{\sigma^{c}}\vert_{S_{\sigma^{c}}(R(K))}=I_{S_{\sigma^{c}}(R(K))}$.
This implies the desired result.
\end{proof}
It is worth noting that  the condition $(i)$ in the above theorem is not  sufficient for a subset $\sigma$ to satisfy MRC.  See Example \ref{4.1ex};
moreover,
by applying Theorem \ref{fthm}, if $\sigma\subset I_{m}$ satisfies MRC, we get some $K$-frames and $K^{\dagger}$-frame with $K^{\dagger}$-dual on the remained index set $\sigma^{c}$.
\begin{cor}
Let $F=\{f_{i}\}_{i\in I_{m}}$ be a   $K$-frame of $\mathcal{H}_{n}$ and $\sigma\subset I_{m}$ satisfies MRC then
\begin{itemize}
\item[(i)]
$\{ K^{*}(S_{\sigma^{c}}\vert_{R(K)})^{-1}\pi_{S_{\sigma^{c}}(R(K))}f_{i}\}_{i\in \sigma^{c}}$ is a $K^{\dagger}$-frame with $K^{\dagger}$-dual $\{(K^{\dagger})^{*}K^{\dagger}f_{i}\}_{i\in \sigma^{c}}$.
\item[(ii)]$\{(K^{\dagger})^{*}K^{\dagger}f_{i}\}_{i\in \sigma^{c}}$ is also a $K$-frame for $\mathcal{H}_{n}$.
\end{itemize}
\end{cor}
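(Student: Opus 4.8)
The plan is to reduce both assertions to a single operator identity. Write $H:=\{K^{*}(S_{\sigma^{c}}\vert_{R(K)})^{-1}\pi_{S_{\sigma^{c}}(R(K))}f_{i}\}_{i\in\sigma^{c}}$ and $G':=\{(K^{\dagger})^{*}K^{\dagger}f_{i}\}_{i\in\sigma^{c}}$, with $\theta_{H},\theta_{G'}\colon l^{2}(\sigma^{c})\to\mathcal{H}_{n}$ the corresponding synthesis operators. The goal of part (i) is precisely the equality $\theta_{H}\theta_{G'}^{*}=K^{\dagger}$: since $\mathcal{H}_{n}$ is finite dimensional $H$ and $G'$ are automatically Bessel, and once this identity holds we get $R(K^{\dagger})=R(\theta_{H}\theta_{G'}^{*})\subseteq R(\theta_{H})$, i.e.\ $H$ is a $K^{\dagger}$-frame and, by the very definition of $K$-dual, $G'$ is a $K^{\dagger}$-dual of $H$. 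Note at the outset that since $\sigma$ satisfies MRC, $\{f_{i}\}_{i\in\sigma^{c}}$ is a $K$-frame, so $S_{\sigma^{c}}\vert_{R(K)}\colon R(K)\to S_{\sigma^{c}}(R(K))$ is invertible (as recalled in Section 2) and every operator above is well defined.

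For the identity, set $A:=K^{*}(S_{\sigma^{c}}\vert_{R(K)})^{-1}\pi_{S_{\sigma^{c}}(R(K))}$ and $C:=(K^{\dagger})^{*}K^{\dagger}$. Then $h_{i}=Af_{i}=A\theta_{\sigma^{c}}\delta_{i}$ and $g'_{i}=Cf_{i}=C\theta_{\sigma^{c}}\delta_{i}$, so $\theta_{H}=A\theta_{\sigma^{c}}$ and $\theta_{G'}=C\theta_{\sigma^{c}}$; as $C$ is self-adjoint, $\theta_{H}\theta_{G'}^{*}=A\theta_{\sigma^{c}}\theta_{\sigma^{c}}^{*}C=AS_{\sigma^{c}}(K^{\dagger})^{*}K^{\dagger}$. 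The key point is that $R((K^{\dagger})^{*})=N(K^{\dagger})^{\perp}=N(K^{*})^{\perp}=R(K)$, so for every $f\in\mathcal{H}_{n}$ the vector $w:=(K^{\dagger})^{*}K^{\dagger}f$ lies in $R(K)$; hence $S_{\sigma^{c}}w=(S_{\sigma^{c}}\vert_{R(K)})w$ lies in $S_{\sigma^{c}}(R(K))$, the projection $\pi_{S_{\sigma^{c}}(R(K))}$ fixes it, and applying $(S_{\sigma^{c}}\vert_{R(K)})^{-1}$ returns $w$. Therefore $AS_{\sigma^{c}}w=K^{*}w$, and using the Moore–Penrose identities $(K^{\dagger}K)^{*}=K^{\dagger}K$ and $K^{\dagger}KK^{\dagger}=K^{\dagger}$ we obtain $\theta_{H}\theta_{G'}^{*}f=K^{*}(K^{\dagger})^{*}K^{\dagger}f=(K^{\dagger}K)^{*}K^{\dagger}f=K^{\dagger}KK^{\dagger}f=K^{\dagger}f$. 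This gives $\theta_{H}\theta_{G'}^{*}=K^{\dagger}$ and proves (i).

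Part (ii) then follows by taking adjoints: $\theta_{G'}\theta_{H}^{*}=(\theta_{H}\theta_{G'}^{*})^{*}=(K^{\dagger})^{*}$, and since $R((K^{\dagger})^{*})=R(K)$ by the computation above, this yields $R(K)\subseteq R(\theta_{G'})$, which is exactly the statement that $\{(K^{\dagger})^{*}K^{\dagger}f_{i}\}_{i\in\sigma^{c}}$ is a $K$-frame for $\mathcal{H}_{n}$. (One could also argue directly that $(K^{\dagger})^{*}K^{\dagger}$ maps $R(K)$ bijectively onto $R(K)$, so that $R(K)=(K^{\dagger})^{*}K^{\dagger}(R(K))\subseteq (K^{\dagger})^{*}K^{\dagger}(R(\theta_{\sigma^{c}}))=R(\theta_{G'})$, again invoking MRC for the inclusion $R(K)\subseteq R(\theta_{\sigma^{c}})$.)

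The only genuinely delicate step is the range bookkeeping in the second paragraph, namely checking that $(K^{\dagger})^{*}K^{\dagger}f$ really lies in $R(K)$; this is what makes $\pi_{S_{\sigma^{c}}(R(K))}$ and the restricted inverse $(S_{\sigma^{c}}\vert_{R(K)})^{-1}$ collapse, after which the statement reduces to routine Moore–Penrose manipulations together with the automatic Bessel property in finite dimensions.
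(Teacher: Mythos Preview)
Your proof is correct and follows essentially the same route as the paper: both establish the operator identity $\theta_{H}\theta_{G'}^{*}=K^{\dagger}$ via the observation that $(K^{\dagger})^{*}K^{\dagger}f\in R(K)$ collapses the projection and restricted inverse, followed by the Moore--Penrose identities. The only cosmetic differences are that the paper writes the identity as a reconstruction sum and appeals to an external lemma for the $K^{\dagger}$-frame conclusion, and for (ii) argues directly via $R(K)=R((K^{\dagger})^{*})\subseteq\mathrm{span}\,G'$ rather than by taking adjoints as you do.
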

\begin{proof}
Since $S_{\sigma^{c}}\vert_{R(K)}$ is invertible we have that
\begin{eqnarray*}
K^{\dagger}f&=& K^{*}(S_{\sigma^{c}}\vert_{R(K)})^{-1}S_{\sigma^{c}}\vert_{R(K)}(K^{\dagger})^{*}K^{\dagger}f\\
&=& K^{*}(S_{\sigma^{c}}\vert_{R(K)})^{-1}\pi_{S_{\sigma^{c}}(R(K))}\sum_{i\in \sigma^{c}}\langle (K^{\dagger})^{*}K^{\dagger}f, f_{i}\rangle f_{i}\\
&=&\sum_{i\in \sigma^{c}}\langle (K^{\dagger})^{*}K^{\dagger}f, f_{i}\rangle K^{*}(S_{\sigma^{c}}\vert_{R(K)})^{-1}\pi_{S_{\sigma^{c}}(R(K))}f_{i},
\end{eqnarray*}
for every $f\in \mathcal{H}_{n}$. Hence, $(i)$ is obtained by Lemma 2.2 of \cite{arefi3}. Using the above computations and the fact that
\begin{eqnarray*}
R(K)= R(K^{\dagger})^{*} \subseteq \textit{span}\{(K^{\dagger})^{*}K^{\dagger}f_{i}\}_{i\in \sigma^{c}},
\end{eqnarray*}
we get $(ii)$.
\end{proof}

\begin{thm}
Let $F$ be the associated matrix of a  $K$-frame for $\mathcal{H}_{n}$. Then the following assertions  hold, where in all matrix products below, we let the sizes  be compatible.
\begin{itemize}
\item[(i)]
$AFU$ is $T_{A}K$-frame for any square  matrix $A$ and a unitary matrix $U$. In particular $FU$ is a $K$-frame and $GU\in KD_{AFU}$ for every $G\in KD_{F}$.
\item[(ii)] If $A$ is invertible and $U$ is a unitary matrix then  $G\in KD_{F}$ if and only if $GU\in KD_{AFU}$.
\item[(iii)]If $F$ is $\alpha$-tight $K$-frame then $AFU$ is $T_{A}K$-frame for any  matrix $A$ and a unitary matrix $U$. Moreover, $FU$ is an $\alpha$-tight $K$-frame.
\item[(iv)]If $F$ is EN then $UFD$ is also EN for any  unitary matrix $U$ and   unitary diagonal matrix $D$.
\item[(v)] If $F$ is MR $K$-frame then $AFD$ as an $T_{A}K$-frame is MR  for any  invertible matrix $A$ and   unitary diagonal matrix $D$.
\item[(vi)] If $F$ satisfies MRC for $r$-erasures then $AFD$ as a $T_{A}K$-frame satisfies MRC for $r$-erasures  for any unitary diagonal matrix  $D$  and  square matrix $A$.
\end{itemize}
\end{thm}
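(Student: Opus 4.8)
The plan is to reduce all six assertions to one bookkeeping observation: the synthesis matrix of the reweighted system $AFU$ is the matrix product $AFU$, so that as operators $\theta_{AFU}=T_A\theta_F T_U$ with $T_U$ unitary on $l^{2}(I_m)$, while $M_{T_AK}=M_{T_A}M_K=AM_K$. Since $T_U$ is surjective this yields the two facts that do almost all the work: $R(\theta_{AFU})=T_A\,R(\theta_F)$ and $S_{AFU}=\theta_{AFU}\theta_{AFU}^{*}=T_AS_FT_A^{*}$. I also record the immediate consequence of the range identity that, when $A$ is invertible and $U$ (or the unitary diagonal $D$) is unitary, a system $E$ is a $K$-frame if and only if $AEU$ is a $T_AK$-frame, since $R(T_AK)=T_A R(K)$ and $T_A$ is injective.

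For (i): $F$ being a $K$-frame means $R(K)\subseteq R(\theta_F)$, hence $R(T_AK)=T_A R(K)\subseteq T_A R(\theta_F)=R(\theta_{AFU})$, so $AFU$ is a $T_AK$-frame (the upper/Bessel bound is automatic in finite dimension); putting $A=I$ gives that $FU$ is a $K$-frame. If $G\in KD_F$, i.e. $FG^{*}=M_K$, then a direct computation gives $(AFU)(GU)^{*}=AF(UU^{*})G^{*}=AFG^{*}=AM_K=M_{T_AK}$, so $GU\in KD_{AFU}$. For (ii), the forward implication is this same computation (which used only that $A$ is square); conversely $GU\in KD_{AFU}$ gives $AFG^{*}=M_{T_AK}=AM_K$, and left-multiplying by $A^{-1}$ yields $FG^{*}=M_K$, i.e. $G\in KD_F$.

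For (iii): $\alpha$-tightness of $F$ reads $S_F=\alpha M_KM_K^{*}$, so $S_{AFU}=T_AS_FT_A^{*}=\alpha(AM_K)(AM_K)^{*}=\alpha M_{T_AK}M_{T_AK}^{*}$; by Douglas' theorem (Theorem \ref{equ0}) this identity already forces $R(T_AK)\subseteq R(\theta_{AFU})$, so $AFU$ is an $\alpha$-tight $T_AK$-frame, and here $A$ need not be square or invertible; take $A=I$ for the last sentence. For (iv): the $i$-th column of $UFD$ is $d_{ii}\,Uf_i$, of norm $|d_{ii}|\,\|Uf_i\|=\|f_i\|$ since $|d_{ii}|=1$ and $U$ is an isometry, so $UFD$ is EN with the same common norm. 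For (vi): fix $\sigma\subseteq I_m$ with $|\sigma|=r$; by MRC for $F$, $\{f_i\}_{i\in\sigma^{c}}$ is a $K$-frame, and applying (i) to this subcollection with the square matrix $A$ and the unitary diagonal block $D_{\sigma^{c}}$ shows $\{d_{ii}Af_i\}_{i\in\sigma^{c}}$ is a $T_AK$-frame; as $\sigma$ was arbitrary, $AFD$ satisfies MRC for $r$-erasures.

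The one place that needs a little care is (v). Since $A$ is invertible, $\mathrm{rank}\,T_AK=\mathrm{rank}\,AM_K=\mathrm{rank}\,M_K=r_k$, so ``$AFD$ is MR'' means precisely that every $r_k$ of its columns form an exact $T_AK$-frame. Fix $\sigma\subseteq I_m$ with $|\sigma|=r_k$; by hypothesis $\{f_i\}_{i\in\sigma}$ is an exact $K$-frame, and by (i) (or the substitution remark above) $\{d_{ii}Af_i\}_{i\in\sigma}$ is a $T_AK$-frame. If it were not exact, deleting some $j\in\sigma$ would leave a $T_AK$-frame whose synthesis operator is $T_A\theta_{\sigma\setminus\{j\}}T_{D_{\sigma\setminus\{j\}}}$; the range identity then gives $R(T_AK)\subseteq T_A R(\theta_{\sigma\setminus\{j\}})$, and cancelling the invertible $T_A$ forces $R(K)\subseteq R(\theta_{\sigma\setminus\{j\}})$, i.e. $\{f_i\}_{i\in\sigma\setminus\{j\}}$ is a $K$-frame, contradicting exactness of $\{f_i\}_{i\in\sigma}$. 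Hence $AFD$ is MR. There is no deep obstacle anywhere; the only genuine subtlety is that transferring \emph{exactness}, rather than just the $K$-frame property, needs the two-sided form of the substitution, which is exactly why invertibility of $A$ is required in (v) but not in (iii) or (vi).
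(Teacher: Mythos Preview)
Your argument is correct and covers all six parts cleanly. The paper's proof follows essentially the same outline, but there is a mild difference in the tools chosen at two spots. For (i) the paper works with the inequality characterization of a $K$-frame, computing $AFU(AFU)^{*}=AFF^{*}A^{*}\geq \gamma\,AM_K(AM_K)^{*}$ from a lower bound $\gamma$ of $F$, whereas you use the equivalent range criterion $R(K)\subseteq R(\theta_F)$ together with the surjectivity of $T_U$; both are one-liners. For (v) the paper argues via left inverses: an exact $K$-frame of $r_k$ vectors is automatically linearly independent with span equal to $R(K)$, so $\mathcal{M}$ has a left inverse and hence $A\mathcal{M}\mathcal{N}$ does too, forcing the columns to be an exact $T_AK$-frame. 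Your contrapositive using the two-sided range transfer (and the observation that $\mathrm{rank}\,T_AK=r_k$ when $A$ is invertible) is an equally valid route and arguably makes clearer why invertibility of $A$ is needed in (v) but not in (iii) or (vi). Parts (ii)--(iv) and (vi) are handled identically in both write-ups.
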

\begin{proof}
Suppose $\gamma$ is a lower $K$-frame  bound of $F$. Then by the assumption in $(i)$ we obtain
\begin{eqnarray*}
AFU(AFU)^{*}&=& AFUU^{*}F^{*}A^{*}\\
&=&AFF^{*}A^{*}\\
&\geq& \gamma AM_{K}M_{K}^{*}A^{*}\\
&=& \gamma  A M_{K} (A M_{K})^{*}.
\end{eqnarray*}
The existence of the upper bound is clear, so $AFU$ is $T_{A}K$-frame of $\mathcal{H}_{n}$.   Moreover, if $A$ is the  identity matrix, $FU$ is a $K$-frame of $\mathcal{H}_{n}$. On the other hand,  for every $G\in KD_{F}$ we have that
\begin{eqnarray*}
AFUU^{*}G^{*}=AFG^{*}=AM_{K},
\end{eqnarray*}
 so $GU\in KD_{AFU}$  and $(i)$ is proved.   
%Suppose $A$ is invertible and $U$ is a unitary matrix then we imply that
%$FG^{*}=M_{K}$ if and only if $AFUU^{*}G^{*}=AM_{K}$. This follows $(ii)$. 
The cases  $(ii)$, $(iii)$ and $(iv)$ are proved by definitions  and some straightforward computations. For $(v)$, we note that  $AFD$ is $T_{A}K$-frame by $(i)$, so we  only show that $AFD$   is MR. Indeed, let $F_{n \times m}$ be MR $K$-frame,  $A_{n \times n}$ and $D_{m \times m}$  invertible and diagonal unitary matrices, respectively. Then a submatrix $n\times r_{k}$ of $AFD$ is as $A\mathcal{M}\mathcal{N}$ where $\mathcal{M}_{n\times r_{k}}$ is a submatrix of $F$ and $\mathcal{N}_{r_{k}\times r_{k}}$ is a diagonal submatrix of $D$. Hence, the columns of  $\mathcal{M}$ constitute an exact $K$-frame and so $\mathcal{M}$ has a left inverse. This implies that $A\mathcal{M}\mathcal{N}$  also has a left inverse i.e., its columns are linearly independent and generate $R(T_{A}K)$. Moreover, this vector columns constitute an exact $T_{A}K$-frame. Thus, $AFD$  is MR.

Finally, let $F$ satisfies MRC for $r$-erasures,  $A$ and $D$ be arbitrary $n \times n$ matrix and $m\times m$  unitary diagonal matrix, respectively. A submatrix $n\times (m-r)$ of $AFD$ is as $A\mathcal{M}\mathcal{N}$ where $\mathcal{M}_{n\times (m-r)}$ is a submatrix of $F$ and $\mathcal{N}_{(m-r)\times (m-r)}$ is a diagonal submatrix of $D$. Since $\mathcal{M}$ is a $K$-frame, applying the assumption,    one immediately obtains that $A\mathcal{M}\mathcal{N}$ is also a $T_{A}K$-frame  by $(i)$. This completes the proof.
\end{proof}
\section{Perfect reconstructions by $(r,k)$-matrices}
In what follows, we present some matrix methods  which lead to fewer errors   if  $K$-frame is of uniform excess   or even we have perfect  reconstruction  under erasures. To this end, we present two approaches, that the first one is motivated by \cite{han2020, han2}, however unlike ordinary frames, for $K$-frames it does not work very well. Hence, we set a new concept so called $(r,k)$-matrix to get perfect reconstruction in this case. Also, we show this approach work for frames sometimes better than  the previous methods.

Let  $F=\{f_{i}\}_{i\in I_{m}}$ be a $K$-frame of $\mathcal{H}_{n}$ with uniform excess $r$ and $G=\{g_{i}\}_{i\in I_{m}}$ be $K$-dual  of $F$. Since $\{f_{i}\}_{i=r+1}^{m}$ is an exact $K$-frame then for any $g_{i}$, $(1\leq i\leq r)$ there exist unique coefficients $\{\alpha_{i,j}\}_{j=r+1}^{m}\subset \mathbb{C}$ so that
\begin{eqnarray*}
\pi_{R(K)}g_{i} = \sum_{j=r+1}^{m} \alpha_{i,j}f_{j}, \quad (1\leq i \leq r).
\end{eqnarray*}
Consider
\begin{equation*}
M_{F} = \left[
 \begin{array}{ccc}

&1& \quad 0\quad  . \quad . \quad . \quad 0  \quad -\alpha^{*}_{1, r+1}\quad . \quad . \quad . \quad -\alpha^{*}_{1, m}\\

&0&\quad  1 \quad . \quad . \quad . \quad 0 \quad -\alpha^{*}_{2, r+1}\quad . \quad . \quad . \quad -\alpha^{*}_{2, m}\\

&&\quad   \quad  \quad  \quad  \quad  \quad  \quad  \quad  \quad   \quad \\

&& \quad  \quad  \quad  \quad  \quad  \quad  \quad  \quad  \quad  \quad \\

&& \quad \quad \quad  \quad  \quad  \quad  \quad  \quad  \quad  \qquad \\

&0& \quad 0  \quad . \quad .  \quad . \quad 1 \quad -\alpha^{*}_{r, r+1}\quad . \quad . \quad . \quad -\alpha^{*}_{r, m}\\

\end{array} \right].
\end{equation*}
Then
\begin{equation*}
M_{F} \left[
 \begin{array}{ccc}

\langle f,\pi_{R(K)}g_{1}\rangle\\
.\\
.\\
.\\
\langle f,\pi_{R(K)}g_{r}\rangle \\
\\
\langle f,f_{r+1}\rangle\\
.\\
.\\
.\\
\langle f,f_{m}\rangle \\
\end{array} \right] = 0,
\end{equation*}
for every $f\in \mathcal{H}_{n}$ and consequently
\begin{equation}\label{12}
M_{1} \left[
 \begin{array}{ccc}

\langle f,\pi_{R(K)}g_{1}\rangle\\
.\\
.\\
.\\
\langle f,\pi_{R(K)}g_{r}\rangle \\

\end{array} \right]+M_{2}\left[
 \begin{array}{ccc}
\langle f,f_{r+1}\rangle\\
.\\
.\\
.\\
\langle f,f_{m}\rangle \\
\end{array} \right] = 0,
\end{equation}
where $M_{1}$  is the submatrix  consisting of the first $r$ columns of $M_{F}$,   and $M_{2}$ is  the submatrix consisting of the rest columns. This assures that for any $r$-erasures of $K$-dual coefficients  $\{\langle f, g_{i}\rangle\}_{i\in \Lambda}$, $\vert \Lambda\vert=r$ we may  recover the coefficients $\{\langle f, \pi_{R(K)}g_{i}\rangle\}_{i\in \Lambda}$ by solving the equation  $(\ref{12})$ as follows
\begin{equation}\label{13}
\left[
 \begin{array}{ccc}

\langle f,\pi_{R(K)}g_{1}\rangle\\
.\\
.\\
.\\
\langle f,\pi_{R(K)}g_{r}\rangle \\

\end{array} \right]= -M_{2}\left[
 \begin{array}{ccc}
\langle f,f_{r+1}\rangle\\
.\\
.\\
.\\
\langle f,f_{m}\rangle \\
\end{array} \right].
\end{equation}
Replacing the coefficients $\{\langle f, \pi_{R(K)}g_{i}\rangle\}_{i\in \Lambda}$ by $\{\langle f, \sum_{j\in \Lambda^{c}}\alpha_{i,j}f_{i}\rangle\}_{i\in \Lambda}$
and  using   the fact that the error operator is obtained by
\begin{eqnarray*}
E_{\Lambda}= \sum_{i\in \Lambda}f_{i} \otimes g_{i}
= \sum_{i\in \Lambda}f_{i} \otimes \pi_{R(K)}g_{i}+\sum_{i\in \Lambda}f_{i} \otimes \pi_{R(K)^{\perp}} g_{i},
\end{eqnarray*}
we get a reduced error operator as
\begin{eqnarray*}
\tilde{E}_{\Lambda}=E_{\Lambda}-\Delta_{\Lambda},
\end{eqnarray*}
where $\Delta_{\Lambda}=\sum_{i\in \Lambda}f_{i} \otimes \pi_{R(K)} g_{i}$. Equivalently, we have  $\tilde{E}_{\Lambda}f=\sum_{i\in \Lambda} \langle f, \pi_{R(K)^{\perp}} g_{i}\rangle f_{i}$, for every $f\in \mathcal{H}_{n}$.
Hence, for  computing of the error operator one needs only find a $K$-dual frame $G$ which satisfies
\begin{eqnarray}\label{new error}
\max_{\vert \Lambda\vert=r} \left\Vert\sum_{i\in \Lambda}f_{i} \otimes \pi_{R(K)^{\perp}} g_{i}\right \Vert=\min\left\{\max_{\vert \Lambda\vert=r}\left\Vert\sum_{i\in \Lambda}f_{i} \otimes \pi_{R(K)^{\perp}} h_{i}\right \Vert ;\quad \{ h_{i}\}_{i\in I_{m}}\in KD_{F}\right\}.
\end{eqnarray}
From this point of view, by  a $K$-frame with uniform excess property which has  a $K$-dual    $\{g_{i}\}_{i\in I_{m}}\subseteq R(K)$, we will have the perfect reconstruction. Otherwise, for every $K$-dual of $F$ which satisfies  $(\ref{new error})$ the error rate is reduced.

Now, we present a new method which  allows a perfect reconstruction.  Moreover, by this approach $K$-frames with uniform excess  under some erasures of $K$-dual coefficients make a  complete reconstruction   and this process is independent of the   choice of $K$-dual. We recall the spark of a matrix \cite{Alexeev} is the size of the smallest linearly dependent subset of the columns and the spark of a collection of vectors in a  finite dimensional Hilbert space is considered as the spark of its synthesis matrix. Moreover,  for any $m\times n$ matrix $A$
\begin{eqnarray}\label{sparkerim}
sparkA=\min\{\Vert x\Vert_{0} : Ax=0, x\neq 0\},
\end{eqnarray}
where $\Vert x\Vert_{0}$, the Humming weight of a vector $x=\{x_{i}\}_{i\in I_{n}}$, is defined as follows
\begin{eqnarray*}
\Vert x\Vert_{0}=\left\vert \{j\in I_{n} : x_{j}\neq 0\}\right \vert.
\end{eqnarray*}
 See \cite{Alexeev, Donoho} for more information.
%\begin{lem}\label{Matrix eq0}
Let $F=\{f_{i}\}_{i\in I_{m}}$ be a   $K$-frame of $\mathcal{H}_{n}$ with a $K$-dual $G=\{g_{i}\}_{i\in I_{m}}$. Then we have that
%so  that $spark (\mathcal{G}_{F})=r+1$. Then   any $r$-erasures of $K$-dual coefficients can be recovered by using  the equality
%\end{lem}
%\begin{proof}
%Suppose $G=\{g_{i}\}_{i\in I_{m}}$ is a $K$-dual of $F$. Then for all $j\in I_{m}$ we have that
\begin{eqnarray*}
\sum_{i\in I_{m}}\langle f_{i},f_{j}\rangle  \langle f ,g_{i}\rangle  =\langle Kf,f_{j}\rangle,
\end{eqnarray*}
for all $j\in I_{m}$. Equivalently
\begin{equation*}
\left[
 \begin{array}{ccc}

\langle f_{1},f_{1}\rangle \quad   \langle f_{2},f_{1}\rangle \quad  . \quad .  \quad . \quad      \langle f_{m},f_{1}\rangle\\

\langle f_{1},f_{2}\rangle  \quad  \langle f_{2},f_{2}\rangle \quad  . \quad .  \quad . \quad   \langle f_{m},f_{2}\rangle\\

\quad  . \quad  \quad  \quad  \quad  \quad  \quad  \quad . \quad   \quad \\

\quad . \quad  \quad  \quad  \quad  \quad  \quad  \quad  .\quad  \quad \\

\quad \quad \quad  \quad  \quad  \quad  \quad  \quad  \quad  \qquad \\

\langle f_{1},f_{m}\rangle \quad  \langle f_{2},f_{m}\rangle \quad . \quad .  \quad . \quad    \langle f_{m},f_{m}\rangle\\

\end{array} \right] \left[
 \begin{array}{ccc}

\langle f, g_{1}\rangle\\
\\
.\\
.\\
\\
\langle f,g_{m}\rangle \\
\end{array} \right]= \left[
 \begin{array}{ccc}

\langle Kf,f_{1}\rangle\\
\\
.\\
.\\
\\
\langle Kf,f_{m}\rangle \\
\end{array} \right],
\end{equation*}
subsequently we get
\begin{eqnarray}\label{Matrix eq0}
\mathcal{G}_{F}G^{*} =F^{*}M_{K}.
\end{eqnarray}
%\end{proof}
This Motivates the following definition.
\begin{defn}\label{deforg}
Suppose that $F=\{f_{i}\}_{i\in I_{m}}$ is a   $K$-frame of $\mathcal{H}_{n}$ with a $K$-dual $G$. Then a $m \times m$ matrix $M_{F,G}$ with spark $r+1$ is called an $(r,k)$-matrix associated with   $F$ and $G$ whenever
\begin{equation}\label{r,kmatrix}
(M_{F,G}-\mathcal{G}_{F})G^{*}=0.
\end{equation}
\end{defn}
\begin{rem} Note that by Definition \ref{deforg},   every $K$-frame $F$ with non-zero vectors  has at least a $(1,k)$-matrix  $M_{F,G}=\mathcal{G}_{F}$ associated with   $F$ and an arbitrary $G\in KD_{F}$.
\end{rem}
The next result shows that for a  $K$-frame $F$, the existence of an $(r,k)$-matrix associated with $F$ and $G\in KD_{F}$ assures the unknown  $r$-erasures of $K$-dual coefficients can be completely recovered.
\begin{thm}\label{Matrix eq} Let $F=\{f_{i}\}_{i\in I_{m}}$ be a   $K$-frame of $\mathcal{H}_{n}$ with a  $K$-dual $G$ and $c=\{c_{i}\}_{i\in I_{m}}$ be a sequence of $K$-dual frame coefficients.
% with unknown erasures $\{c_{i}\}_{i\in \Lambda}$, $\vert \Lambda \vert =r$.
\begin{itemize}
\item[(i)] If there exists an $(r,k)$-matrix $M_{F,G}$ associated with   $F$ and $G$ then   any $r$-erasures of $K$-dual coefficients can be recovered by solving the equation
\begin{equation}\label{unkneq}
(M_{F,G}-\mathcal{G}_{F})c=0,
\end{equation}
\item[(ii)] If $sparkF=r+1$ then
 any $r$-erasures  of $K$-dual frame coefficients  can be recovered by solving the equation
$\mathcal{G}_{F}c =\theta_{F}^{*}K$.
\end{itemize}
\end{thm}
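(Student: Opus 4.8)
The plan is to exploit the identity \eqref{Matrix eq0}, namely $\mathcal{G}_F G^* = F^* M_K$, which was derived just before the definition of an $(r,k)$-matrix. The received data is the full coefficient sequence $c=\{c_i\}_{i\in I_m}$ except that the entries indexed by some $\Lambda\subset I_m$ with $|\Lambda|=r$ are missing; we know the true coefficients are $c_i=\langle f,g_i\rangle$, so the true $c$ is one solution of the relevant linear system. For part (i), I would first observe that the true coefficient vector $c^{\mathrm{true}}$, being of the form $G^*f$ (the $j$-th entry $\langle f,g_j\rangle$), satisfies $(M_{F,G}-\mathcal{G}_F)c^{\mathrm{true}}=0$: indeed $(M_{F,G}-\mathcal{G}_F)G^* = 0$ by \eqref{r,kmatrix}, and applying this operator identity to $f$ gives exactly \eqref{unkneq}. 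So $c^{\mathrm{true}}$ is \emph{a} solution; the point is that it is the \emph{only} one consistent with the known entries.

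The uniqueness step is where the spark condition does its work. Suppose $c$ and $c'$ are two solutions of \eqref{unkneq} that agree on $\Lambda^c$ (the known positions). Then $x:=c-c'$ satisfies $(M_{F,G}-\mathcal{G}_F)x=0$ and is supported in $\Lambda$, so $\|x\|_0\le r$. Now I need to turn the hypothesis $\operatorname{spark} M_{F,G}=r+1$ into a statement about $M_{F,G}-\mathcal{G}_F$. This is the main obstacle and I expect the intended reading of Definition~\ref{deforg} is that it is $M_{F,G}$ itself whose columns one should think of when recovering: rewriting \eqref{unkneq} as $M_{F,G}\,c = \mathcal{G}_F\, c$ and using that $\mathcal{G}_F c = \mathcal{G}_F c^{\mathrm{true}} = F^*M_K f = \mathcal{G}_F G^* f$ is a \emph{known} quantity once the signal (equivalently $Kf$) is known — or more carefully, using that $\mathcal{G}_F$ applied to any consistent completion returns the same vector because of \eqref{r,kmatrix}. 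Concretely: from $(M_{F,G}-\mathcal{G}_F)x=0$ with $\mathcal{G}_F x$ determined by $x|_{\Lambda^c}=0$ together with \eqref{r,kmatrix}, one deduces $M_{F,G}x=0$ for a vector $x$ with $\|x\|_0\le r$; by \eqref{sparkerim}, $\operatorname{spark} M_{F,G}=r+1$ forces $x=0$. Hence the completion is unique and equals $c^{\mathrm{true}}$, so solving \eqref{unkneq} for the unknown $r$ entries recovers them exactly.

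For part (ii), the argument is the special case $M_{F,G}=\mathcal{G}_F$ is \emph{not} available (its spark need not be $r+1$), so instead one uses \eqref{Matrix eq0} directly: the true coefficients satisfy $\mathcal{G}_F c^{\mathrm{true}} = F^* M_K = \theta_F^* K$ (here $\theta_F^*K$ denotes the matrix $F^*M_K$, whose $j$-th row is $\langle Kf,f_j\rangle$). If $c$ is any other solution of $\mathcal{G}_F c=\theta_F^*K$ agreeing with $c^{\mathrm{true}}$ on $\Lambda^c$, then $x=c-c^{\mathrm{true}}$ lies in $\ker\mathcal{G}_F=\ker\theta_F=\ker F$ and has $\|x\|_0\le r$; since $\operatorname{spark}F=r+1$ means every $r$ columns of $F$ are linearly independent, $Fx=0$ with $\|x\|_0\le r$ forces $x=0$. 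Thus the unknown entries are the unique completion, and solving $\mathcal{G}_F c=\theta_F^*K$ recovers them. The only subtlety to flag is the consistency of notation $\theta_F^*K$ versus $F^*M_K$ and the fact that the right-hand side is genuinely available to the decoder (it is $\{\langle Kf,f_j\rangle\}_j$, computable from the surviving information), which should be stated explicitly.
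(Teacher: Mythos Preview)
Your argument for part~(ii) is correct and aligns with the paper: the paper observes $\ker F=\ker F^*F=\ker\mathcal{G}_F$, hence $\operatorname{spark}\mathcal{G}_F=\operatorname{spark}F=r+1$, so $\mathcal{G}_F$ is itself an $(r,k)$-matrix and one may invoke~(i). Your direct kernel argument (difference $x$ lies in $\ker\mathcal{G}_F=\ker F$ with $\|x\|_0\le r$, hence $x=0$) is the same computation unpacked.

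For part~(i), the approach you label as your first suggestion---rewrite the equation as $M_{F,G}c=\mathcal{G}_Fc$ and treat $\mathcal{G}_Fc^{\mathrm{true}}=F^*M_Kf=\{\langle Kf,f_j\rangle\}_j$ as a known right-hand side---is exactly what the paper does. It splits $M_{F,G}=[M_1\,|\,M_2]$ according to the erasure set, writes $M_1c_1=\theta_F^*Kf-M_2c_2$, and uses $\operatorname{spark}M_{F,G}=r+1$ to guarantee that $M_1$ has full column rank, whence $c_1=M_1^\dagger(\theta_F^*Kf-M_2c_2)$ via the left pseudo-inverse.

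Your ``more careful'' alternative, however, has a genuine gap. From $(M_{F,G}-\mathcal{G}_F)x=0$ and $x|_{\Lambda^c}=0$ you claim one deduces $M_{F,G}x=0$; this would require $\mathcal{G}_Fx=0$, but $\mathcal{G}_Fx$ is the combination of the $\Lambda$-indexed columns of $\mathcal{G}_F$ with coefficients $x|_\Lambda$, and neither $x|_{\Lambda^c}=0$ nor the defining relation $(M_{F,G}-\mathcal{G}_F)G^*=0$ forces that to vanish. Likewise, the assertion that ``$\mathcal{G}_F$ applied to any consistent completion returns the same vector'' does not follow from \eqref{r,kmatrix}: the kernel of $M_{F,G}-\mathcal{G}_F$ can be strictly larger than $R(G^*)$. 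The spark hypothesis is on $M_{F,G}$, not on $M_{F,G}-\mathcal{G}_F$, so a uniqueness argument for the homogeneous equation \eqref{unkneq} alone cannot conclude; you must work with $M_{F,G}c=\theta_F^*Kf$ as the paper does. Your closing remark that the availability of $\{\langle Kf,f_j\rangle\}_j$ to the decoder deserves explicit justification is fair---your own claim that it is ``computable from the surviving information'' is not substantiated---but the paper does not address this point either and simply treats this vector as given.
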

\begin{proof}
To show $(i)$, without losing the generality, we suppose  for an original vector $f$ the erasure   coefficients are $c_{1}:=\{c_{i}\}_{i=1}^{r}=\{\langle f,g_{i}\rangle \}_{i=1}^{r}$ and the remaining coefficients are $c_{2}:=\{\langle f,g_{i}\rangle \}_{i=r+1}^{m}$. Furthermore, let $M_{1}$ and $M_{2}$ denote submatrices of $M_{F,G}$ containing of the first $r$ columns and the rest, respectively. Then using $(\ref{unkneq})$ we get
\begin{equation*}
M_{1} c_{1}+M_{2}c_{2}=\mathcal{G}_{F}c= \theta_{F}^{*}\theta_{F}c.
\end{equation*}
Equivalently,
\begin{equation}\label{10002}
M_{1} \left[
 \begin{array}{ccc}

\langle f,g_{1}\rangle\\
.\\
.\\
.\\
\langle f,g_{r}\rangle \\

\end{array} \right]=\left[
 \begin{array}{ccc}

\langle Kf,f_{1}\rangle\\
.\\
.\\
.\\
\langle Kf,f_{m}\rangle \\
\end{array} \right]-M_{2}\left[
 \begin{array}{ccc}
\langle f,g_{r+1}\rangle\\
.\\
.\\
.\\
\langle f,g_{m}\rangle \\
\end{array} \right].
\end{equation}
Using the assumption that the columns of $M_{1}$ are linearly independent and so the pseudo inverse $ M_{1}^{\dagger}=(M_{1}^{T}M_{1})^{-1}M_{1}^{T}$ there exists \cite{Berut76}. Hence by $(\ref{10002})$ we obtain
\begin{equation*}
[c_{i}]_{i=1}^{r}=[\langle  f,g_{i}\rangle]_{i=1}^{r}= M_{1}^{\dagger}\left([\langle Kf,f_{i}\rangle]_{i=1}^{m}-M_{2}[\langle  f,g_{i}\rangle]_{i=r+1}^{m} \right).
\end{equation*}
Thus, the missing coefficients are obtained completely and we have the perfect reconstruction.
%\begin{cor}\label{3.9}
%\end{cor}
On the other hand, it is known that  
\begin{equation*}
Ker F=Ker F^{*}F=Ker \mathcal{G}_{F}.
\end{equation*}
 Therefore by $(\ref{sparkerim})$  we have that  $sparkF=spark\mathcal{G}_{F}$. Hence $M_{F,G}=\mathcal{G}_{F}$ is an $(r,k)$-matrix associated with   $F$ and $G$. Now, the proof of $(ii)$ is complete by using $(i)$. Note that this $(r,k)$-matrix is independent of $K$-dual $G$.
\end{proof}
\begin{cor}\label{excessd}
Let $F=\{f_{i}\}_{i\in I_{m}}$ be a   $K$-frame of $\mathcal{H}_{n}$ with uniform excess $r>0$ then every $m-r+1$ columns of $F$ be linearly dependent. Moreover,  any $(m-r)$-erasures  of $K$-dual coefficients   can be exactly recovered for every  $K$-dual of $F$.
\end{cor}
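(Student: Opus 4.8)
The plan is to prove that $\textit{spark}(F)=(m-r)+1$ and then quote Theorem \ref{Matrix eq}(ii) with $m-r$ playing the role of $r$. Accordingly I would split the argument into two parts: (a) every $(m-r)$-element subset of the columns of $F$ is linearly independent, so $\textit{spark}(F)\ge m-r+1$; and (b) every $(m-r+1)$-element subset of the columns is linearly dependent, so $\textit{spark}(F)\le m-r+1$ (this last statement is also the first assertion of the corollary).

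Part (a) is immediate from the hypothesis. If $\sigma\subset I_{m}$ with $|\sigma|=r$, then $\{f_{i}\}_{i\in\sigma^{c}}$ is an exact $K$-frame by uniform excess, and an exact $K$-frame is linearly independent: were $\{f_{i}\}_{i\in\sigma^{c}}$ dependent, some $f_{j_{0}}$ would lie in $\textit{span}\{f_{i}:i\in\sigma^{c},\ i\neq j_{0}\}$, so deleting $f_{j_{0}}$ would not change this span and would preserve the inclusion $R(K)\subseteq\textit{span}\{f_{i}:i\in\sigma^{c}\}$, contradicting exactness. Since every subset of size at most $m-r$ sits inside one of size exactly $m-r$, it is independent as well, whence $\textit{spark}(F)\ge m-r+1$. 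Note also that, $R(K)$ being contained in the span of these $m-r$ vectors, we have $m-r\ge r_{k}\ge 1$ as soon as $K\neq 0$, so $m>r$.

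Part (b) is the heart of the matter. Fix $J\subseteq I_{m}$ with $|J|=m-r+1$ and assume, towards a contradiction, that $\{f_{i}\}_{i\in J}$ is linearly independent; put $V=\textit{span}\{f_{i}:i\in J\}$, so that $\{f_{i}\}_{i\in J}$ is a basis of $V$. For each $j\in J$ the set $J\setminus\{j\}$ has $m-r$ elements, hence arises from $F$ by deleting exactly $r$ columns and is a $K$-frame; therefore $R(K)\subseteq W_{j}:=\textit{span}\{f_{i}:i\in J,\ i\neq j\}$. Expressed in the basis $\{f_{i}\}_{i\in J}$ of $V$, $W_{j}$ is precisely the hyperplane on which the $f_{j}$-coordinate vanishes, so $\bigcap_{j\in J}W_{j}=\{0\}$ and thus $R(K)=\{0\}$, i.e. $K=0$, a contradiction. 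Hence $\{f_{i}\}_{i\in J}$ is linearly dependent, and combined with (a) this gives $\textit{spark}(F)=m-r+1$.

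Finally, Theorem \ref{Matrix eq}(ii) applied with $m-r$ in place of $r$ shows that any $(m-r)$-erasures of the $K$-dual frame coefficients are recovered by solving $\mathcal{G}_{F}c=\theta_{F}^{*}K$; and since the pertinent $(m-r,k)$-matrix is $\mathcal{G}_{F}$ itself, which does not involve the chosen $K$-dual, the reconstruction works for every $K$-dual of $F$. I expect part (b), the hyperplane-intersection step, to be the only genuine obstacle; part (a) and the closing invocation are bookkeeping on top of results already in hand.
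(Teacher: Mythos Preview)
Your proof is correct and follows the same overall strategy as the paper: establish $\textit{spark}\,F=m-r+1$ and then invoke Theorem~\ref{Matrix eq}(ii). The execution of the key step (your Part~(b)) differs, however. The paper picks just two overlapping $(m-r)$-subsets of the alleged independent set $J$, writes each basis vector $\eta_i$ of $R(K)$ in both, and uses uniqueness of coordinates in $\{f_i\}_{i\in J}$ to force the first and last coefficients to vanish; this exhibits an $(m-r-1)$-element $K$-frame, contradicting \emph{exactness}. You instead intersect all $m-r+1$ coordinate hyperplanes $W_j$ to force $R(K)=\{0\}$, contradicting $K\neq 0$. Your argument is more symmetric and avoids the coefficient bookkeeping, at the price of using more subsets and needing the (harmless, implicit) assumption $K\neq 0$; the paper's version needs only two subsets and contradicts the very hypothesis of uniform excess. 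You also make explicit the lower bound $\textit{spark}\,F\ge m-r+1$ via linear independence of exact $K$-frames, which the paper leaves tacit.
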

 \begin{proof}
 Since $F$ is with uniform excess $r$ so  any $m-r$ columns of associate matrix $F$ constitutes an exact $K$-frame for $\mathcal{H}_{n}$. Without loss of generality, let the first $m-r+1$ columns of $F$ are linearly independent. Moreover, assume that $R(K)=\textit{span}\{\eta_{i}\}_{i\in I_{l}}$. Then for every $\eta_{i}$ there exist unique coefficients $\{\alpha_{i,j}\}_{i=1}^{m-r}$ and $\{\alpha^{'}_{i,j}\}_{i=2}^{m-r+1}$ so that
\begin{eqnarray*}
\eta_{i}=\sum_{j=1}^{m-r}\alpha_{i,j}f_{j}=\sum_{j=2}^{m-r+1}\alpha^{'}_{i,j}f_{j}, \quad (i\in I_{l}) .
\end{eqnarray*}
By these equalities, and the assumption that $\{f_{i}\}_{i=1}^{m-r+1}$ is  linearly independent   we conclude that
\begin{eqnarray*}
&&\alpha_{i,1}=\alpha^{'}_{i,m-r+1}=0, \\
&&\alpha_{i,j}=\alpha^{'}_{i,j} \quad (2\leq j\leq m-r).
\end{eqnarray*}
Consequently,
\begin{eqnarray*}
\eta_{i}=\sum_{j=2}^{m-r}\alpha_{i,j}f_{j},
\end{eqnarray*}
for all $i\in I_{l}$. So $\{f_{i}\}_{i=2}^{m-r}$ is also  a $K$-frame  of $\mathcal{H}_{n}$
that is a contradiction. This follows that every $m-r+1$ columns of $F$ is  linearly dependent. The moreover part follows from Theorem \ref{Matrix eq} (ii) and the fact that $sparkF=m-r+1$.
\end{proof}
It is worth noticing that, Corollary \ref{excessd} for   $r=0$ fails. Indeed, if    $F$ is a   $K$-frame   with uniform excess $0$ then $F$  is an exact $K$-frame and so $sparkF=+\infty$.
Also, Theorem \ref{Matrix eq}   leads to a new approach for erasure recovery by using   ordinary frames in finite dimensional Hilbert spaces by changing encode and decode frames.
\begin{cor}\label{frameco}
Let $F=\{f_{i}\}_{i\in I_{m}}$ be a   frame of $\mathcal{H}_{n}$ with a   dual frame $G$ so that $sparkF=r+1$. Then
 any $r$-erasures  of  dual frame coefficients  can be recovered by solving the equation
$\mathcal{G}_{F}c =\theta_{F}^{*}$, where $c$ is a dual frame coefficient with
 unknowns  $\{c_{i}\}_{i\in \Lambda}$, $\vert \Lambda \vert =r$. \end{cor}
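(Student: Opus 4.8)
The plan is to obtain Corollary \ref{frameco} as the special case $K = I_{\mathcal{H}_{n}}$ of Theorem \ref{Matrix eq} (ii), so the first task is to record the routine identifications that make this legitimate. If $K = I_{\mathcal{H}_{n}}$, then $R(K) = \mathcal{H}_{n} = \textit{span}\{f_{i}\}_{i\in I_{m}} \subseteq R(\theta_{F})$, so every frame $F$ for $\mathcal{H}_{n}$ is in particular an $I_{\mathcal{H}_{n}}$-frame; moreover the defining identity $(\ref{dual1})$ of an $I_{\mathcal{H}_{n}}$-dual, namely $f = \sum_{i\in I_{m}}\langle f,g_{i}\rangle f_{i}$, is exactly the ordinary duality relation $\theta_{G}\theta_{F}^{*} = I_{\mathcal{H}_{n}}$, and a dual frame is in particular Bessel (indeed every finite sequence is). Hence a dual frame $G$ of $F$ belongs to $KD_{F}$ with $K = I_{\mathcal{H}_{n}}$, and all hypotheses of Theorem \ref{Matrix eq} are in force.

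Next I would carry out the notational collapse. The matrix representation $M_{I_{\mathcal{H}_{n}}}$ is the $n\times n$ identity matrix, so the matrix identity $(\ref{Matrix eq0})$ reduces to $\mathcal{G}_{F}G^{*} = F^{*}$, and the recovery equation ``$\mathcal{G}_{F}c = \theta_{F}^{*}K$'' of Theorem \ref{Matrix eq} (ii) becomes exactly ``$\mathcal{G}_{F}c = \theta_{F}^{*}$'', whose right-hand side, evaluated at the transmitted signal $f$, is the vector of $F$-analysis coefficients $\{\langle f,f_{i}\rangle\}_{i\in I_{m}}$. The Gramian $\mathcal{G}_{F} = F^{*}F$ is unchanged, and from $Ker F = Ker F^{*}F = Ker \mathcal{G}_{F}$ together with $(\ref{sparkerim})$ one gets $sparkF = spark\mathcal{G}_{F} = r+1$ --- the same computation already used inside the proof of Theorem \ref{Matrix eq} (ii). In particular $M_{F,G} := \mathcal{G}_{F}$ is an $(r,k)$-matrix associated with $F$ and $G$, independently of the choice of $G$.

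Finally I would invoke Theorem \ref{Matrix eq} (i)--(ii). Splitting the dual coefficient vector as $c = c_{1}\oplus c_{2}$ with unknown block $c_{1} = \{c_{i}\}_{i\in\Lambda}$, $\vert\Lambda\vert = r$, and received block $c_{2} = \{c_{i}\}_{i\in\Lambda^{c}}$, the equation $\mathcal{G}_{F}c = \theta_{F}^{*}$ rearranges to $M_{1}c_{1} = \theta_{F}^{*} - M_{2}c_{2}$, where $M_{1}$ is the submatrix of $\mathcal{G}_{F}$ formed by the $r$ columns indexed by $\Lambda$ and $M_{2}$ by the remaining columns. Since $r < spark\mathcal{G}_{F}$, the columns of $M_{1}$ are linearly independent, so $M_{1}$ has the left inverse $M_{1}^{\dagger} = (M_{1}^{T}M_{1})^{-1}M_{1}^{T}$ and the erased coefficients are recovered uniquely as $c_{1} = M_{1}^{\dagger}(\theta_{F}^{*} - M_{2}c_{2})$, which is precisely the conclusion of Corollary \ref{frameco}.

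There is no genuinely hard step: the whole content is that the spark hypothesis transfers verbatim from $F$ to its Gramian and that Theorem \ref{Matrix eq} was already proved for an arbitrary bounded operator $K$, so taking $K$ to be the identity is admissible. The only places that deserve a line of care are the identification of the two notions of duality when $K = I_{\mathcal{H}_{n}}$ and the bookkeeping that turns $M_{K}$ into the identity matrix, hence $\theta_{F}^{*}K$ into $\theta_{F}^{*}$. What is worth \emph{highlighting} rather than proving is the methodological point: the very same spark condition that drives the $(r,k)$-matrix machinery yields, upon interchanging the roles of the encoding and decoding frames, a perfect erasure-recovery scheme for classical frames, complementing the $K$-frame results obtained earlier in this section.
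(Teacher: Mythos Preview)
Your proposal is correct and takes essentially the same approach as the paper: the corollary is obtained by specializing Theorem \ref{Matrix eq}(ii) to $K = I_{\mathcal{H}_{n}}$, noting that $M_{K}$ becomes the identity so $\theta_{F}^{*}K = \theta_{F}^{*}$, and that the spark identity $sparkF = spark\mathcal{G}_{F}$ already established there applies verbatim. The paper in fact states the corollary without a separate proof, so your explicit unpacking of the specialization is more detailed than what appears in the paper but tracks exactly the intended derivation.
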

\begin{cor}
Let $F=\{f_{i}\}_{i\in I_{m}}$ be a   frame of $\mathcal{H}_{n}$ with a  dual frame $G$. Then
 any $1$-erasure  of  dual frame coefficients  can be recovered by
$\mathcal{G}_{F}c =\theta_{F}^{*}$,
for   unknown  erasure $c_{j}$, $j\in I_{m}$.
\end{cor}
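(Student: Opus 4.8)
The plan is to read off this corollary as the one‑erasure ($r=1$) instance of Corollary \ref{frameco}, that is, the $K = I_{\mathcal{H}_n}$ case of Theorem \ref{Matrix eq}(ii), paying attention to the one place where the generic spark hypothesis can be dropped. First I would specialize (\ref{Matrix eq0}): with $K = I_{\mathcal{H}_n}$ one has $M_K = I_m$, so the matrix equation becomes $\mathcal{G}_F G^* = F^*$, and for a signal $f$ the vector $c = \{\langle f, g_i\rangle\}_{i\in I_m}$ of dual‑frame coefficients satisfies $\mathcal{G}_F c = \theta_F^*\theta_F c = \theta_F^*(\theta_F\theta_G^* f) = \theta_F^* f$, since $\theta_F\theta_G^* = I_{\mathcal{H}_n}$. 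Thus the system to be solved for the unknown erased entry is exactly $\mathcal{G}_F c = \theta_F^*$ as displayed, with known right‑hand side $\theta_F^* f$.

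Next I would observe that for every frame $F = \{f_i\}_{i\in I_m}$ with nonzero vectors, as tacitly assumed in the Remark following Definition \ref{deforg}, one has $spark\,F \geq 2$: indeed $Ker\,F = Ker\,\mathcal{G}_F$, so by (\ref{sparkerim}) $spark\,F = spark\,\mathcal{G}_F$, and the $j$‑th column $\mathcal{G}_F\delta_j$ of the Gramian has $j$‑th entry $\langle f_j, f_j\rangle = \|f_j\|^2 > 0$, hence is nonzero; so no single column of $\mathcal{G}_F$ vanishes. Consequently $M_{F,G} := \mathcal{G}_F$ is a $(1,k)$‑matrix associated with $F$ and $G$, and Theorem \ref{Matrix eq}(i) applies with $r = 1$. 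Concretely, if $c_j$ ($j\in I_m$) is the lost entry, splitting off the $j$‑th column turns $\mathcal{G}_F c = \theta_F^* f$ into $(\mathcal{G}_F\delta_j)\,c_j = \theta_F^* f - \sum_{i\neq j} c_i\,(\mathcal{G}_F\delta_i)$, whose right‑hand side involves only known quantities; since $\mathcal{G}_F\delta_j \neq 0$ it has a left inverse, namely its Moore--Penrose pseudo‑inverse $(\mathcal{G}_F\delta_j)^\dagger$, which is a genuine left inverse because a nonzero column has full column rank, cf. \cite{Berut76}, and applying it recovers $c_j$ uniquely.

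The proof is thus routine bookkeeping; the only subtlety, and the point I would stress, is that Corollary \ref{frameco} is stated with $spark\,F = r+1$ on the nose, which for $r = 1$ would restrict attention to frames containing two linearly dependent vectors. The present statement nevertheless holds for an arbitrary frame, because the spark hypothesis enters the proof of Theorem \ref{Matrix eq}(i) solely to ensure linear independence of the $r$ columns of the $(r,k)$‑matrix that carry the unknowns, and for $r = 1$ that reduces to the nonvanishing of a single column, which is automatic here. (A frame vector equal to $0$ contributes nothing to the reconstruction formula, so the corresponding index is harmless and may be excluded as in the Remark after Definition \ref{deforg}.) Finally, exactly as in Theorem \ref{Matrix eq}(ii), I would point out that this $(1,k)$‑matrix $\mathcal{G}_F$ is independent of the choice of dual $G$.
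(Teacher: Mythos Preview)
Your argument is correct and matches the paper's approach: the corollary is stated there without proof as an immediate consequence of Theorem \ref{Matrix eq} together with the Remark following Definition \ref{deforg}, and you have spelled out precisely those details, including the observation that $spark\,F\geq 2$ (rather than $=2$) is all that is needed for $r=1$. One harmless slip: since $K\in B(\mathcal{H}_n)$, the matrix $M_K$ is $n\times n$, so for $K=I_{\mathcal{H}_n}$ you should write $M_K=I_n$ rather than $I_m$; the resulting equation $\mathcal{G}_F G^*=F^*$ is of course unaffected.
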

The above corollaries illustrate the advantage and difference of using $(r,k)$-matrix and erasure recovery matrix \cite{han2020}. Indeed, if $(F,G)$ is a pair of dual frames for $\mathcal{H}_{n}$. 
Unlike the customary approach, we consider dual frame $G$ to encode a signal and $F$ to decode the measurements.
 Then every erasure of encoding frame coefficients as $\{\langle f,g_{i}\rangle \}_{i\in \sigma}$, $\vert \sigma\vert\leq spark F-1$ can be exactly recovered by Corollary \ref{frameco}. So,  frames with large  spark are  resilient  against more erasures of associated dual frame coefficients; since for frames $K=I_{\mathcal{H}_{n}}$ we call $(r,k)$-matrix associated to $F$ and $G$ an $r$-matrix for convenience. In this case,   if $M_{F,G}$ is an $r$-matrix and  $N$ is an $r$-erasures recovery matrix for $F$, i.e., $NF^{*}=0$ and $spark N= r+1$ then
\begin{eqnarray*}
NM_{F,G}G^{*}=NF^{*}=0.
\end{eqnarray*} 
Thus, $NM_{F,G}$ is an $\rho$-erasure recovery matrix for $G$ with $\rho=sparkNM_{F,G}\geq r+1$ since $Ker M_{F,G} \subseteq Ker NM_{F,G}$.
\subsection{The existence of $(r,k)$-matrices }
 In the following, we show the relation between the existence of $(r,k)$-matrices with MRC.
The following result gives a  necessary condition for the existence of $(r,k)$-matrices and a sufficient condition for a $K$-dual to satisfy MRC.
 \begin{thm}\label{Mrcresult}
Suppose that $F=\{f_{i}\}_{i\in I_{m}}$ is a   $K$-frame of $\mathcal{H}_{n}$ and $G\in KD_{F}$. If  there exists a matrix $M_{F,G}$ which satisfies $(\ref{r,kmatrix})$. Then $G$ satisfies MRC for $(spark(M_{F,G}-\mathcal{G}_{F})-1)$-erasures.
\end{thm}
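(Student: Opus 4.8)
The plan is to turn the hypothesis into a purely linear-algebraic statement and then read off the $K^{*}$-frame inequalities for the retained indices. Write $N := M_{F,G} - \mathcal{G}_{F}$ and $s := spark(N)$, so that $(\ref{r,kmatrix})$ reads $NG^{*}=0$ and the claim to be proved is: for every $\sigma\subset I_{m}$ with $\vert\sigma\vert=s-1$, the subfamily $\{g_{i}\}_{i\in\sigma^{c}}$ is again a $K^{*}$-frame for $\mathcal{H}_{n}$, which is what is meant by $G$ satisfying MRC for $(s-1)$-erasures. First I would record two preliminary facts. (a) Since $G$ is a $K$-dual of $F$, $FG^{*}=M_{K}$ gives $\theta_{G}\theta_{F}^{*}=K^{*}$, hence $R(K^{*})\subseteq R(\theta_{G})$, and Douglas' theorem (Theorem \ref{equ0}) produces $A>0$ with $A\Vert Kf\Vert^{2}\le\sum_{i\in I_{m}}\vert\langle f,g_{i}\rangle\vert^{2}$ for all $f\in\mathcal{H}_{n}$; so $G$ already carries a uniform lower bound. (b) $NG^{*}=0$ forces $R(G^{*})\subseteq\ker N$, so $\dim\ker N\ge rank\,G\ge rank\,K=r_{k}\ge 1$, whence $N$ has a nontrivial kernel and $s=spark(N)$ is finite.

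The key step is the passage from the spark hypothesis to a left inverse. Fix $\sigma$ with $\vert\sigma\vert=s-1$. Since the smallest linearly dependent set of columns of $N$ has size $s$, any $s-1$ columns of $N$ are linearly independent, so the $m\times(s-1)$ submatrix $N_{\sigma}$ of columns indexed by $\sigma$ has full column rank and $N_{\sigma}^{\dagger}=(N_{\sigma}^{*}N_{\sigma})^{-1}N_{\sigma}^{*}$ is a genuine left inverse. Partitioning $NG^{*}=0$ according to $\sigma$ and $\sigma^{c}$ gives $N_{\sigma}(G_{\sigma})^{*}=-N_{\sigma^{c}}(G_{\sigma^{c}})^{*}$, where $(G_{\sigma})^{*}$ and $(G_{\sigma^{c}})^{*}$ are the blocks of rows of $G^{*}$ indexed by $\sigma$ and $\sigma^{c}$; left-multiplying by $N_{\sigma}^{\dagger}$ yields
\begin{equation*}
(G_{\sigma})^{*}=-\,N_{\sigma}^{\dagger}N_{\sigma^{c}}(G_{\sigma^{c}})^{*}.
\end{equation*}
Equivalently, uniformly in $f$, the erased coefficients $\{\langle f,g_{i}\rangle\}_{i\in\sigma}$ are a fixed linear image of the retained ones $\{\langle f,g_{i}\rangle\}_{i\in\sigma^{c}}$; this is simultaneously the perfect-recovery formula and the mechanism producing the norm control.

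To conclude I would apply the displayed identity to an arbitrary $f$, set $c:=\Vert N_{\sigma}^{\dagger}N_{\sigma^{c}}\Vert$, so that $\Vert(G_{\sigma})^{*}f\Vert\le c\,\Vert(G_{\sigma^{c}})^{*}f\Vert$, and estimate
\begin{equation*}
\sum_{i\in I_{m}}\vert\langle f,g_{i}\rangle\vert^{2}=\Vert(G_{\sigma})^{*}f\Vert^{2}+\Vert(G_{\sigma^{c}})^{*}f\Vert^{2}\le(1+c^{2})\sum_{i\in\sigma^{c}}\vert\langle f,g_{i}\rangle\vert^{2}.
\end{equation*}
Combining this with the lower bound from (a) gives $\frac{A}{1+c^{2}}\Vert Kf\Vert^{2}\le\sum_{i\in\sigma^{c}}\vert\langle f,g_{i}\rangle\vert^{2}$, while the upper (Bessel) bound for $\{g_{i}\}_{i\in\sigma^{c}}$ is inherited from $G$; hence $\{g_{i}\}_{i\in\sigma^{c}}$ is a $K^{*}$-frame for $\mathcal{H}_{n}$. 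As $\sigma$ was an arbitrary set of cardinality $s-1$, $G$ satisfies MRC for $(s-1)$-erasures. I expect the only delicate points to be the two preliminary observations of the first paragraph, namely that a $K$-dual automatically possesses a lower frame bound (so the statement is not vacuous) and that $spark(N)$ is finite, together with being careful that the retained subfamily is a $K^{*}$-frame rather than a $K$-frame; once these are settled, the spark-to-left-inverse step and the one-line estimate are routine.
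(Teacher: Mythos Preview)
Your proof is correct and follows the same skeleton as the paper's: set $N=M_{F,G}-\mathcal{G}_{F}$, use the spark hypothesis to invert the $\sigma$-block of $N$ from the left, recover the erased coefficients $\{\langle f,g_{i}\rangle\}_{i\in\sigma}$ linearly from the retained ones, and deduce the lower $K^{*}$-frame inequality on $\sigma^{c}$.

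The one genuine difference lies in the final estimate. The paper substitutes the recovered coefficients back into the reconstruction $Kf=\sum_{i}\langle f,g_{i}\rangle f_{i}$, rewrites $Kf$ against the auxiliary family $\{f_{j}+\sum_{i\in\sigma}\alpha_{i,j}f_{i}\}_{j\in\sigma^{c}}$, and then applies Cauchy--Schwarz together with a Bessel bound for that family to obtain $\Vert Kf\Vert^{2}/B\le\sum_{j\in\sigma^{c}}\vert\langle f,g_{j}\rangle\vert^{2}$. You bypass this by first establishing (via Douglas' theorem) that $G$ itself already satisfies a lower bound $A\Vert Kf\Vert^{2}\le\sum_{i}\vert\langle f,g_{i}\rangle\vert^{2}$, and then using the single operator-norm inequality $\Vert(G_{\sigma})^{*}f\Vert\le c\,\Vert(G_{\sigma^{c}})^{*}f\Vert$ to transfer the full lower bound to $\sigma^{c}$. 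Your route is shorter and avoids the slightly awkward phrasing in the paper (``for every $f$ there exists $\{\alpha_{i,j}\}$''), which obscures the fact that the recovery coefficients are $f$-independent; on the other hand, the paper's argument exhibits an explicit $K$-dual pairing between $\{g_{j}\}_{j\in\sigma^{c}}$ and a concrete family built from $F$, which is slightly more informative. Your handling of the finiteness of $spark(N)$ and of the $K^{*}$-frame (rather than $K$-frame) nature of the conclusion is correct and matches the paper's intent.
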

\begin{proof}
If $M_{F,G}$ has a zero  column then $spark(M_{F,G}-\mathcal{G}_{F})=1$ so the result clearly holds. Now, let all columns of $M_{F,G}$ are non-zero. Hence, $spark(M_{F,G}-\mathcal{G}_{F})\geq 2$. 
Also, by the assumption,
we have 
$(M_{F,G}-\mathcal{G}_{F})G^{*}=0.$
 On the other hand, if all columns of $M_{F,G}-\mathcal{G}_{F}$ are linearly independent, then it is invertible which implies $\theta_{G}^{*}=0$ that is a contradiction. Therefore, 
\begin{eqnarray*}
2\leq spark(M_{F,G}-\mathcal{G}_{F}) < \infty.
\end{eqnarray*}
Now, considering
\begin{eqnarray*}
\rho=spark(M_{F,G}-\mathcal{G}_{F})-1
\end{eqnarray*}
implies that every $\rho$ columns of $M_{F,G}-\mathcal{G}_{F}$ is linearly independent and so
  every $Kf\in R(K)$ can be recovered from the coefficients $\{\langle f,g_{i}\rangle \}_{i\in \Lambda^{c}}$  for every $\Lambda \subset I_{m}$, $\vert \Lambda\vert\leq \rho$. Without loss of the generality, we discuss  the first $\rho$ columns. More precisely, for every $f\in \mathcal{H}_{n}$ there exists $\{\alpha_{i,j}\}_{j=\rho+1}^{m}$ so that
\begin{eqnarray*}
\langle f, g_{i}\rangle =\sum_{j=\rho+1}^{m}\alpha_{i,j}\langle f, g_{j}\rangle, \quad (i\in I_{\rho}).
\end{eqnarray*}
Thus, we can write
\begin{eqnarray*}
\Vert Kf\Vert^{4}&=& \left\vert  \langle Kf, Kf\rangle\right\vert^{2}\\
&=&\left\vert  \left\langle \sum_{i=1}^{\rho}\sum_{j=\rho+1}^{m}\alpha_{i,j}\left\langle f, g_{j}\right\rangle f_{i}+\sum_{i=\rho+1}^{m}\left\langle f, g_{i}\right\rangle f_{i}, Kf \right\rangle\right\vert^{2}\\
&=&\left\vert \left\langle \sum_{j=\rho+1}^{m}\langle f, g_{j}\rangle  \left(f_{j}+ \sum_{i=1}^{\rho} \alpha_{i,j}f_{i}\right), Kf \right\rangle\right\vert^{2}\\
&\leq& \sum_{j=\rho+1}^{m}\left\vert \langle f, g_{j}\rangle  \right\vert^{2}\sum_{j=\rho+1}^{m}\left\vert \left\langle \left(f_{j}+ \sum_{i=1}^{\rho} \alpha_{i,j}f_{i}\right), Kf \right\rangle \right\vert^{2}\\
&\leq& B\Vert Kf\Vert^{2}\sum_{j=\rho+1}^{m}\left\vert \langle f, g_{j}\rangle  \right\vert^{2},
\end{eqnarray*}
where the existence of the upper bound $B$ in  the last inequality is assured by the assumption that $F$ is a $K$-frame. Therefore,
\begin{eqnarray*}
\Vert Kf\Vert^{2}/B\leq\sum_{j=\rho+1}^{m}\left\vert \langle f, g_{j}\rangle  \right\vert^{2}\leq D\Vert f\Vert^{2},
\end{eqnarray*}
for an upper bound $D$ of $G$. Hence, $\{g_{i}\}_{i=\rho+1}^{m}$ is also a $K^{*}$-frame for $\mathcal{H}_{n}$. This follows the desired result.
\end{proof}
%Notice that, the necessary condition in Theorem \ref{Mrcresult} that  the columns of  $M_{F,G}-\mathcal{G}_{F}$ are non-zero is %necessary. See Example \ref{finexam}.

\section{Examples}
In this section, we present several  examples to show not only the importance of the necessary or  sufficient conditions in our main results, but also the  adventages and  significant differences of $(r,k)$-matrices with respect to $r$-erasure recovery matrices \cite{han2020}. In particular, some examples of   some $K$-frames  (frames) is given for which there does not exist any  appropriate erasure recovery matrix, but infinitely many $(r,k)$-matrices.
 In this section, we consider  $\lbrace e_{i}\rbrace_{i\in I_{n}}$ as  the  standard orthonormal basis of $\mathbb{R}^{n}$.
The first example shows that
the converse of Theorem \ref{1thm} does not hold in general.
\begin{ex}\label{1ex1}
Consider, $\mathcal{H}= \mathbb{R}^{3}$ and $F=\{e_{1}, e_{2}\}$. Also, let $K\in B(\mathcal{H} )$ so that $Kf=\left(c_{1}+c_{2}+\dfrac{1}{2}c_{3}\right)e_{1}$, for every $f=\sum_{i\in I_{3}}c_{i}e_{i}$.
Then $F$ is a $K$-frame for $\mathcal{H}$ and $S_{F}\vert_{R(K)}=I_{R(K)}$. Hence
\begin{eqnarray*}
G:=K^{*}(S_{F}\vert_{R(K)})^{-1}\pi_{S_{F}(R(K))}F=K^{*}((S_{F}\vert_{R(K)})^{-1})^{*}\pi_{R(K)}F=\{e_{1}+e_{2}+\dfrac{1}{2}e_{3}, 0\}.
\end{eqnarray*}
A straightforward computation reveals that
\begin{equation*}
\theta_{F}\theta_{G}^{*} = K,
\end{equation*}
so $G\in KD_{F}$. Moreover $S_{G}=\theta_{G}\theta_{Z}^{*}$ for every $K$-dual $Z$ of $F$, i.e., $G$ is the canonical $K$-dual of $F$, however $F$ is not a subset of  $R(K)$ or $S_{F}(R(K))$.
\end{ex}
Also, the following example shows the importance of the sufficient conditions in Theorem \ref{1thm}.
\begin{ex}\label{ex1}
Let $\mathcal{H} = \mathbb{R}^{4}$ and define
\begin{equation*}
{F}=\left\{ \left[
 \begin{array}{ccc}

1\\
\\
0 \\
\\
0 \\
\\
0\\
\end{array} \right], \left[
 \begin{array}{ccc}

0\\
\\
1\\
\\
0\\
\\
0\\
\end{array} \right], \left[
 \begin{array}{ccc}

0\\
\\
0\\
\\
1\\
\\
0\\
\end{array} \right], \left[
 \begin{array}{ccc}

 1\\
\\
0\\
\\
1\\
\\
0\\
\end{array} \right] \right\},
\end{equation*}
%\begin{eqnarray*}
%F=\{e_{1},e_{2},e_{3},e_{1}+e_{3}\}
%\end{eqnarray*}
and $K\in B(\mathcal{H} )$ as $Kf=(c_{1}+c_{3})e_{1}+(c_{2}+\dfrac{1}{2}c_{4})e_{2}$, for every $f=\sum_{i\in I_{4}}c_{i}e_{i}$.
Then $F$ is a $K$-frame for  $\mathcal{H}$ and
\begin{eqnarray*}
\pi_{R(K)}F=\{e_{1},e_{2},0,e_{1}\}.
\end{eqnarray*}
Furthermore, the restriction of $K$-frame operator
\begin{eqnarray*}
S_{F}\vert_{R(K)}:\textit{span}{\{e_{1},e_{2}\}}\rightarrow \textit{span}{\{2e_{1}+e_{3},e_{2}\}}
\end{eqnarray*}
 is given by
\begin{eqnarray*}
S_{F}\vert_{R(K)}(e_{i}) =\begin{cases}
\begin{array}{ccc}
2e_{1}+e_{3}& \;
{i=1}, \\
e_{2}& \; {i=2} \\
\end{array}
\end{cases}
\end{eqnarray*}
that  is an  invertible operator. So,  we obtain
\begin{equation*}
{G:=K^{*}(S_{F}\vert_{R(K)})^{-1}\pi_{S_{F}(R(K))}F}=\left\{ \left[
 \begin{array}{ccc}

\dfrac{2}{5}\\
\\
0 \\
\\
\dfrac{2}{5}  \\
\\
0\\
\end{array} \right], \left[
 \begin{array}{ccc}

0\\
\\
1\\
\\
0\\
\\
\dfrac{1}{2}\\
\end{array} \right], \left[
 \begin{array}{ccc}

\dfrac{1}{5}\\
\\
0\\
\\
\dfrac{1}{5}\\
\\
0\\
\end{array} \right], \left[
 \begin{array}{ccc}

 \dfrac{3}{5}\\
\\
0\\
\\
 \dfrac{3}{5}\\
\\
0\\
\end{array} \right] \right\}.
\end{equation*}
The Bessel sequence $G$ is a $K$-dual of $\pi_{R(K)}F$. However, $G$ is neither a $K$-dual of $F$, nor the canonical $K$-dual of $\pi_{R(K)}F$. Indeed, for every $f=\sum_{i\in I_{4}}c_{i}e_{i}$
\begin{equation*}
\theta_{F}\theta_{G}^{*}f=e_{1}(c_{1}+c_{3})+e_{2}(c_{2}+c_{4}/2)+4/5 e_{3}(c_{1}+c_{3}),
\end{equation*}
and consequently,
\begin{equation*}
\theta_{F}\theta_{G}^{*}e_{3} = e_{1}+\dfrac{4}{5}e_{3}  \neq Ke_{3}.
\end{equation*}
Thus, $G$ is not a $K$-dual of $F$.
Also, consider
\begin{equation*}
H= \left\{g_{1}, g_{2}, 0, g_{4}\right \}.
%H= \left\{\dfrac{2}{5}(e_{1}+e_{3}), e_{2}+\dfrac{1}{2}e_{4}, 0, \dfrac{3}{5}(e_{1}+e_{3})  \right \}.
\end{equation*}
Then $H$ is a $K$-dual of $\pi_{R(K)}F$. Moreover,
 $\Vert  \theta_{H}^{*}\Vert <\Vert \theta_{G}^{*}\Vert$ and this implies that $G$ cannot be the canonical $K$-dual of $\pi_{R(K)}F$.
\end{ex}
The next example shows that the condition $(i)$ in Theorem \ref{fthm} is not  sufficient for a subset $\sigma\subset I_{m}$ to satisfies MRC.
\begin{ex}\label{4.1ex}
Let $\mathcal{H}$, $K$ and $F$ be as in Example \ref{ex1}. Take $\sigma=\{1,3\}$ then the sequence  $\{f_{i}\}_{i\in \sigma^{c}}=\{e_{2}, e_{1}+e_{3}\}$ clearly is not a $K$-frame and so $\sigma$ does not satisfy MRC. However,
\begin{eqnarray*}
R(\theta_{F}^{*}K)= \textit{span}\{(a, b,0,a): a,b\in \mathbb{R}\},
\end{eqnarray*}
which implies that $R(\theta_{F}^{*}K)\cap \textit{span}\{\delta_{i}\}_{i\in \sigma}=\{0\}$.
\end{ex}
In the sequel, we observe the advantages of using  $(r,k)$-matrices with respect to $r$-erasure recovery matrices.
In fact,   we present some $K$-frames  (frames)   for which there does not exist any  appropriate erasure recovery matrix, but infinitely many $(r,k)$-matrices.

\begin{ex}
Suppose that 
\begin{equation*}
{F}=\left\{ \left[
 \begin{array}{ccc}

1\\
\\
0 \\
\\
-1 \\
\\
0\\
\end{array} \right], \left[
 \begin{array}{ccc}

0\\
\\
0\\
\\
1\\
\\
0\\
\end{array} \right], \left[
 \begin{array}{ccc}

0\\
\\
0\\
\\
-1\\
\\
2\\
\end{array} \right], \left[
 \begin{array}{ccc}

 1/2\\
\\
0\\
\\
1/2\\
\\
0\\
\end{array} \right] \right\},
\end{equation*}
and $K\in B(\mathbb{R}^{4})$ is defined by $Ke_{1}=Ke_{2}=Ke_{3}=e_{1}$ , $Ke_{4}=e_{4}-e_{1}$. Then $F$ is a $K$-frame for $\mathbb{R}^{4}$ and the Gramian matrix is obtained by
\begin{equation*}
\mathcal{G}_{F} = \left[
 \begin{array}{ccc}

2\quad -1 \quad  1 \quad 0\\

-1\quad 1 \quad -1 \quad ‎\dfrac{1}{2} \\

1\quad -1 \quad 5 \quad ‎\dfrac{-1}{2} \\

0\quad ‎\dfrac{1}{2}  \quad ‎\dfrac{-1}{2} \quad ‎\dfrac{1}{2}‎ \\

\end{array} \right].
\end{equation*}
Since $spark \mathcal{G}_{F}=3$ by Theorem \ref{Matrix eq} (ii),  we can derive any $2$-erasures   of  $K$-dual frame coefficients for every $K$-dual of $F$. Moreover, there is not any appropriate erasure recovery matrix for $F$. Indeed, if $N$ is an $l\times 4$ matrix so that $NF^{*}=0$ then the third column of $N$ is zero, i.e., $spark N=1$. Thus $N$ can only preserves one erasure of $K$-frame coefficients. However, there exist infinitely many $(r,k)$-matrix. Put 
\begin{equation*}
{G}=\left\{ \left[
 \begin{array}{ccc}

1\\
\\
1 \\
\\
1 \\
\\
1/2\\
\end{array} \right], \left[
 \begin{array}{ccc}

1\\
\\
1\\
\\
1\\
\\
1/2\\
\end{array} \right], \left[
 \begin{array}{ccc}

0\\
\\
0\\
\\
0\\
\\
1/2\\
\end{array} \right], \left[
 \begin{array}{ccc}

 0\\
\\
0\\
\\
0\\
\\
1\\
\end{array} \right] \right\}.
\end{equation*}
One may check that $FG^{*}=K$ so the sequence $G$ is a $K$-dual of $F$ and every matrix $M_{F,G}$ satisfies $(\ref{r,kmatrix})$ is as follows
\begin{equation*}
M_{F,G} = \left[
 \begin{array}{ccc}

‎\alpha_{1}\quad 1-‎\alpha_{1} \quad  ‎\alpha_{2} \quad ‎\dfrac{1}{2}‎-‎\dfrac{1}{2}‎‎\alpha_{2}\\

‎\beta_{1}\quad -‎\beta_{1} \quad ‎\beta_{2} \quad -‎\dfrac{1}{2}‎\beta_{2} \\

‎‎\gamma_{1}\quad -‎‎\gamma_{1} \quad ‎‎\gamma_{2} \quad 2-‎\dfrac{1}{2}‎‎\gamma_{2} \\

\eta‎‎‎‎_{1}\quad ‎\dfrac{1}{2}-\eta‎‎‎‎_{1}  \quad \eta‎‎‎‎_{2} \quad ‎\dfrac{1}{4}-‎\dfrac{1}{2}\eta‎‎‎‎_{2} \\

\end{array} \right]
\end{equation*}
Therefore, we can find infinitely many $(r,k)$-matrix associated with $F$ and $G$, for $1\leq r \leq 3$. For example set $‎\alpha_{1}=‎\alpha_{2}=1, ‎\beta_{1}=‎\beta_{2}=1, ‎‎\gamma_{1}=-1, ‎‎\gamma_{2}=2$ and $\eta‎‎‎‎_{1}=\eta‎‎‎‎_{2}=1/2$ we obtain an $(3,k)$-matrix.
% as follows
%\begin{equation*}
%M = \left[
% \begin{array}{ccc}

%1 \quad 0 \quad 1 \quad 0\\

%1 \quad -1 \qquad 1 \quad ‎\dfrac{-1}{2}‎ \\

%-1 \qquad 1 \qquad 2 \qquad 1 \\

%‎\dfrac{1}{2} \quad 0  \quad ‎\dfrac{1}{2} \quad 0 \\

%\end{array} \right]
%\end{equation*}.
\end{ex}
In the last example, we survey the case that a $K$-frame $F$ satisfies $F\subseteq R(K)$, i.e., it can be considered as a frame for $R(K)$.
Moreover, we observe that unlike $r$-erasure recovery matrices  for ordinary frames \cite{han2020}   the existence of $(r,k)$-matrices is independent of the fact that a $K$-frame or its dual satisfies MRC.
\begin{ex}\label{finexam}
Consider 
\begin{equation*}
{F}=\left\{ \left[
 \begin{array}{ccc}

0\\
\\
0 \\
\\
0 \\
\\
-1\\
\end{array} \right], \left[
 \begin{array}{ccc}

0\\
\\
1\\
\\
0\\
\\
0\\
\end{array} \right], \left[
 \begin{array}{ccc}

0\\
\\
2\\
\\
0\\
\\
-1\\
\end{array} \right], \left[
 \begin{array}{ccc}

 1\\
\\
0\\
\\
0\\
\\
0\\
\end{array} \right] \right\},
\end{equation*}
Also let $Kf= c_{1}e_{1}+c_{2}e_{2}+(c_{3}+c_{4})e_{4}$, for every $f=\sum_{i\in I_{4}}c_{i}e_{i}$.
Then $F$ is a $K$-frame for  $\mathbb{R}^{4}$ and
\begin{equation*}
\mathcal{G}_{F} = \left[
 \begin{array}{ccc}

1\quad 0 \quad  1 \quad 0\\

0\quad 1 \quad 2 \quad 0 \\

1\quad 2 \quad 5 \quad 0 \\

0\quad 0  \quad 0 \quad 1 \\

\end{array} \right].
\end{equation*}
Thus $spark \mathcal{G}_{F} =3$ and we can consider $\mathcal{G}_{F}$ as a $(2,k)$-matrix associated with $F$ and each one of its $K$-duals. We observe that, non of $2$-columns in $F$ produce $R(K)$. Put
\begin{equation*}
{G}=\left\{ \left[
 \begin{array}{ccc}

0\\
\\
0 \\
\\
-1 \\
\\
-1\\
\end{array} \right], \left[
 \begin{array}{ccc}

0\\
\\
1\\
\\
0\\
\\
0\\
\end{array} \right], \left[
 \begin{array}{ccc}

0\\
\\
0\\
\\
0\\
\\
0\\
\end{array} \right], \left[
 \begin{array}{ccc}

 1\\
\\
0\\
\\
0\\
\\
0\\
\end{array} \right] \right\},
\end{equation*}
we have that
\begin{equation*}
\theta_{F}\theta^{*}_{G}=K,
\end{equation*}
i.e., $G\in KD_{F}$. However, $R(K^{*})=\textit{span}\{e_{1}, e_{2}, e_{3}+e_{4}\}$ and so $G$ does not satisfy MRC even for $1$-erasures. Moreover, non of $2$-columns in $G$ remain $K^{*}$-frame for  $\mathbb{R}^{4}$. It is worth to note that by taking
\begin{equation*}
M_{F,G} = \left[
 \begin{array}{ccc}

1\quad 0 \quad  a_{1} \quad 0\\

0\quad 1 \quad a_{2} \quad 0 \\

1\quad 2 \quad a_{3} \quad 0 \\

0\quad 0  \quad a_{4} \quad 1 \\

\end{array} \right],
\end{equation*}
for all $a_{i}\in \mathbb{R}$, $i\in I_{4}$, we obtain   a family of $(r,k)$-matrices with respect to $F$ and $G$ which $r$ is dependent on the choice of $a_{i}$, $i\in I_{4}$. In this case  $3$-columns of $M_{F,G}-\mathcal{G}_{F}$ are zero but with appropriate choices of $a_{i}$, we get $spark M_{F,G}=4$.
\end{ex}
It is worth noticing that $F$ in Example \ref{finexam} is also a frame for $R(K)$. From this point of view every $m\times 4$  matrix $N$ so that  $NF^{*}=0$ has a zero column. Hence $spark N=1$ and so there is no appropriate
 erasure recovery matrix  for $F$, however we obtain infinitely  $(r,k)$-matrices with respect to $F$ and $G$. 
%.............................................................

%............................................................................

%-------------------------------------------------------------------------------------------------------------------------------------------------

\bibliographystyle{amsplain}

% ------------------------------------------------------------------------
\end{document}